\DeclareMathOperator{\Tr}{Tr}
\DeclareMathOperator{\C}{\mathcal{C}}
\title{Evasive subspaces \vspace{.3cm}}
\author[$\dagger$]{Daniele Bartoli}
\author[$\star$]{Bence Csajb\'ok }
\author[$\ddag$]{Giuseppe Marino}
\author[$\ddag$]{Rocco Trombetti}
\affil[$\dagger$]{Department of Mathematics and Informatics, University of Perugia, Perugia,  Italy, \textit{daniele.bartoli@unipg.it}\vspace*{.3cm}}
\affil[$\star$]{ELKH--ELTE Geometric and Algebraic Combinatorics Research Group,
	ELTE E\"otv\"os Lor\'and University, Budapest, Hungary, 
	Department of Geometry,
	1117 Budapest, P\'azm\'any P.\ stny.\ 1/C, Hungary,	\textit{csajbokb@cs.elte.hu}\vspace*{.3cm}}
\affil[$\ddag$]{Dipartimento di Matematica e Applicazioni ``R. Caccioppoli'',
	Universit\`a degli Studi di Napoli ``Federico II'',
	Via Cintia, Monte S.Angelo I-80126 Napoli, Italy, \textit{giuseppe.marino@unina.it, rtrombet@unina.it}}
\date{}
\newcommand{\cC}{{\mathcal C}}
\newcommand{\cG}{{\mathcal G}}
\newcommand{\cF}{{\mathcal F}}
\newcommand{\cD}{{\mathcal D}}
\newcommand{\cS}{{\mathcal S}}
\newcommand{\F}{{\mathbb F}}
\newcommand{\la}{\langle}
\newcommand{\ra}{\rangle}
\newcommand{\V}{{\mathbb V}}
\newtheorem{theorem}{Theorem}[section]
\newtheorem{corollary}[theorem]{Corollary}
\newtheorem{definition}[theorem]{Definition}
\newtheorem{proposition}[theorem]{Proposition}
\newtheorem{result}[theorem]{Result}
\newtheorem{example}[theorem]{Example}
\newtheorem{remark}[theorem]{Remark}
\DeclareMathOperator{\PG}{{PG}}
\begin{document}
	\maketitle
	
	\begin{abstract}
		Let $V$ denote an $r$-dimensional vector space over $\F_{q^n}$, the finite field of $q^n$ elements. 
		Then $V$ is also an $rn$-dimension vector space over $\F_q$. An $\F_q$-subspace $U$ of $V$ is $(h,k)_q$-evasive if it meets the $h$-dimensional $\F_{q^n}$-subspaces of $V$ in $\F_q$-subspaces of dimension at most $k$. 
		The $(1,1)_q$-evasive subspaces are known as scattered and they have been intensively studied in finite geometry, their maximum size 
		has been proved to be $\lfloor rn/2 \rfloor$ when $rn$ is even or $n=3$. 
		
		We investigate the maximum size of $(h,k)_q$-evasive subspaces, study two duality relations among them and provide various constructions. 
		In particular, we present the first examples, for infinitely many values of $q$, of maximum scattered subspaces when $r=3$ and $n=5$. 
		We obtain these examples in characteristics $2$, $3$ and $5$.

	\end{abstract}
	
	
	\bigskip
	{\it Keywords: evasive set, scattered subspace, $q$-polynomial}

	\section{Introduction}
	
	Let $\cF$ be a set of subsets of a set $A$ and let $S \subseteq A$. 
	In \cite[Definition 1]{PR2004} Pudl\'ak and R\"odl called $S$ \emph{$c$-evasive for $\cF$} if for all $W\in \cF$
	\[ | W \cap S|\leq c.\]
	In their work $A$ was then taken to be the set of vectors of an $r$-dimensional vector space $V$ over $\F_2$, the finite field of two elements, and $\cF$ was the set of all $d$-dimensional affine subspaces of $V$ for some positive integer $d$. 
	The authors of \cite{PR2004} also showed how such evasive sets can be used to construct explicit Ramsey graphs. 
	Later, these objects were called \emph{subspace evasive}, see Guruswami \cite[Section 4]{G2011}, \cite[Definition 2]{GWX} or Dvir and Lovett \cite[Definition 3.1]{DL2012} and were studied intensively since they can be used to obtain explicit list decodable codes with optimal rate and constant list-size. More precisely, following Guruswami, let $V$ be an $r$-dimensional vector space over the (usually but not necessarily finite) field $\F$. Then $S \subseteq V$ is called \emph{$(d,c)$-subspace evasive} if for every $d$-dimensional linear subspace $H$ of $V$ we have $|S \cap H|\leq c$. 
	
	Note that Dvir and Lovett used the term $(d,c)$-subspace evasive to denote $c$-evasive sets for the set of all $d$-dimensional affine subspaces. The two concepts do not coincide, however, they are strongly related as the next two paragraphs show.
	
	Every $d$-dimensional affine subspace is contained in a $(d+1)$-dimensional linear subspace. Thus if $S$ is $c$-evasive for the set of all $(d+1)$-dimensional linear subspaces then it is $c$-evasive for the set of all $d$-dimensional affine subspaces.
	
	Also, if $\F$ is a finite field, say $\F_{p^s}$, for some $p$ prime, and $S$ is additive (or equivalently, $S$ is an $\F_p$-linear subspace of $V$), then $S$ is $c$-evasive for the set of all $d$-dimensional linear subspaces if and only if it is $c$-evasive for the set of all $d$-dimensional affine subspaces. To see this, consider any $d$-dimensional affine subspace $A$ and a vector $x\in S \cap A$. 
	For a set $B$ we denote by $B-x$ the difference set $\{b-x : b \in B\}$. Then
	\[|S \cap A|=|(S-x) \cap (A-x)|=|S \cap (A-x)|,\]
	where $S-x=S$ follows from the additivity of $S$, and $A-x$ is a $d$-dimensional linear subspace of $V$. 
	
	In \cite{GWX} additive subspace evasive sets were constructed, that is, 
	subspace evasive sets which are also linear subspaces of $V$ over some subfield $\F_{p^t}$ of $\F_{p^s}$, $t\mid s$. Clearly, if $\F_q$ is a subfield of $\F_{p^s}$, then $p^s=q^n$ for some integer $n$. The aim of this paper is to study these evasive sets.
	Note that, if $S$ is linear over $\F_q$ then intersections of $S$ with linear (or affine) $\F_{q^n}$-subspaces of $V$ are also linear (or affine) $\F_q$-subspaces. From now on, we will denote by $V=V(r,q^n)$ an $r$-dimensional vector space over the finite field $\F_{q^n}$. Note that $V$ is also an $rn$-dimensional vector space over $\F_q$.
	
	\begin{definition}
	\label{maindef}
	An $\F_q$-subspace $U$ of $V$ will be called  \emph{$(h,k)_q$-evasive} if $\la U \ra_{\F_{q^n}}$ has dimension at least $h$ over $\F_{q^n}$ and the $h$-dimensional $\F_{q^n}$-subspaces of $V$ meet $U$ in $\F_q$-subspaces of dimension at most $k$.
	\end{definition}
	
	Note that in the definition above the condition on the dimension of $\la U \ra_{\F_q^n}$ is to exclude trivial examples for which some of our results would not apply. 
	So, take an $\F_q$-subspace $U$ of $V$ such that the condition $\dim_{q^n}\la U \ra_{q^n}\geq h$ holds. 
	Then it is easy to find an $h$-dimensional $\F_{q^n}$-subspace meeting $U$ in an $\F_q$-subspace of dimension at least $h$ and hence for an $(h,k)_q$-evasive subspace $h\leq k$ must hold. Clearly, if $\dim_q U \leq k$ then $U$ is an $(h,k)_q$-evasive subspace.
	The $(r,k)_q$-evasive subspaces are the $\F_q$-subspaces of dimension at most $k$ which span $V$ over $\F_{q^n}$. 
	Note that an $(h,k)_q$-evasive subspace is also $(h',k')_q$-evasive for any $h'\leq h$ and $k'\geq k$.
	
	\medskip
	If $S$ is $c$-evasive for $\cF$ then the same holds for every subset $S' \subseteq S$. Thus there are two natural questions to ask:
	\begin{enumerate}
		\item[(A)] For given $c$ and $\cF$, what is the size of the largest $c$-evasive set for $\cF$? We will call evasive sets of this size \emph{maximum}.
		\item[(B)] For given $c$ and $\cF$, determine the smallest $c$-evasive sets for $\cF$ which are not contained in a larger one. We will call these evasive sets \emph{maximal}.
	\end{enumerate}
	
	For example if $\cF$ is the set of edges of a graph $\cG$ and $c=1$ then (A) asks for the size of a maximum independent set in $\cG$ and (B) asks for the size of a minimum vertex cover in $\cG$.
	
	\medskip
	
	The concept of evasive sets is well known in finite geometry as well. 
	Denote by $\Sigma$ a finite projective space isomorphic to $\PG(d,q)$. 
	A \emph{cap of kind $h$} is an $h$-evasive set for the set of $(h-1)$-dimensional projective subspaces of $\Sigma$, cf. \cite{Tallini1961}.
	The most studied examples are the \emph{arcs} ($h=d$), \emph{caps} ($h=2$) \cite{Hirsbook} and \emph{tracks} ($h=d-1$) \cite{DeBoer}.
	To arcs and tracks correspond the MDS and almost MDS codes, respectively (\cite{Simeonbook}, \cite{LR2015}).
	One can weaken further these conditions and consider point sets meeting each hyperplane in at most $n$ points.
	For example \emph{$(k,n)$-arcs} are the $n$-evasive point sets of size $k$ for the set of lines in a projective plane of order $q$.
	There are many famous conjectures regarding the size of a maximum evasive set in this setting.
	For example the maximal arc conjecture, which was proved by Ball, Blokhuis and Mazzocca \cite{BBM, BB}. 
	The main conjecture for MDS codes is equivalent to ask the maximum size of an arc in $\Sigma$ \cite[Section 7]{Simeonbook}. 
	
	\medskip
	
	Recently, in \cite{THR} Randrianarisoa introduced $q$-systems. 	A \emph{$q$-system} $U$ over $\F_{q^n}$ with parameters $[m,r,d]$ is an $m$-dimensional $\F_q$-subspace generating over $\F_{q^n}$ a $r$-dimensional $\F_{q^n}$-vector space $V$, where 
	\[d=m-\max\{\dim(U \cap H) : H \mbox{ is a hyperplane of $V$}\}.\]
	With our notation, it is equivalent to say that $U$ is $(r-1,m-d)_q$-evasive in $V(r,q^n)$ and it is not $(r-1,m-d+1)_q$-evasive. 
These objects are in one-to-one correspondence with $\F_{q^n}$-linear $[m,r,d]$-rank metric codes, cf. {\cite[Theorem 2]{THR}}.
	
	\medskip
	
	In \cite{CsMPZ2019} the authors investigated the following subspace analogue of caps of kind $h$: for $0<h<r$ an $\F_q$-subspace $U$ of $V=V(r,q^n)$ is called \emph{$h$-scattered} if $U$ generates $V$ over $\F_{q^n}$ and any $h$-dimensional $\F_{q^n}$-subspace of $V$ meets $U$ in an $\F_q$-subspace of dimension at most $h$. With the notation of this paper, the $h$-scattered subspaces are the $(h,h)_q$-evasive subspaces generating $V$ over $\F_{q^n}$. 
	
	\medskip
	
	A $t$-spread of $V$ is a partition of $V\setminus \{{\bf 0}\}$ by $\F_q$-subspaces of dimension $t$. In particular 
	$\cD:=\{ \la {\bf u} \ra_{\F_{q^n}}\setminus \{{\bf 0}\} : {\bf u} \in V\}$ is the so called Desarguesian $n$-spread of $V$. 
	An $\F_q$-subspace $U$ of $V$ is called \emph{scattered with respect to a spread $\cS$} if $U$ meets each element of $\cS$ in at most a one-dimensional $\F_q$-subspace, i.e. when $U$ is $q$-evasive for $\cS$. In \cite{BL2000} Blokhuis and Lavrauw proved that $rn/2$ is the maximum dimension of a scattered subspace of $V$ w.r.t. a Desarguesian $n$-spread. In other words, the dimension of a maximum $(1,1)_q$-evasive subspace is at most $rn/2$.
	After a series of papers it is now known that this bound is sharp when $rn$ is even, cf. Result \ref{result}. 
	Note that the $1$-scattered subspaces are the scattered subspaces generating $V$ over $\F_{q^n}$.
	
	In \cite{CsMPZ2019} the authors generalized the Blokhuis--Lavrauw bound and proved that the dimension of an $h$-scattered subspace is at most $rn/(h+1)$. 
	They also introduced a relation, called \emph{Delsarte duality}, on $\F_q$-subspaces of $V$ and proved that the Delsarte dual of an $rn/(h+1)$-dimensional $h$-scattered subspace is $h'$-scattered with dimension $r'n/(h'+1)$ in some vector space $V'(r',q^n)$. For the precise statement see \cite[Theorem 3.3]{CsMPZ2019}. Delsarte duality is a well known concept in the theory of rank metric codes. There is a correspondence between maximum $(r-1)$-scattered subspaces, also called \emph{scattered subspaces w.r.t. hyperplanes}, and certain MRD (maximum rank metric) codes, see \cite{Lu2017, ShVdV}. In fact, if $\cC$ is the MRD-code correspoinding to an $n$-dimensional $(r-1)$-scattered subspace $U$ then the Delsarte dual $\cC^{\perp}$ of $\cC$ is the MRD-code  corresponding to the Delsarte dual $\bar U$ of $U$ which is again maximum scattered w.r.t. hyperplanes; see \cite[Remark 4.11]{CsMPZ2019}.
	
	\medskip
	
	In Section \ref{Sec:2} we collect existing constructions and present some new ones to obtain large $(h,k)_q$-evasive subspaces. 
	In particular we study the direct sum of an $(h_1,k_1)_q$-evasive and an $(h_2,k_2)_q$-evasive subspace under various conditions. 
	In Section \ref{DD} we investigate the ``ordinary'' duality (induced by a non-degenerate reflexive sesquilinear form) and Delsarte duality on 
	evasive subspaces. In Section \ref{maximumsize} we present (in some cases sharp) upper bounds on the size of maximum evasive subspaces. 
	In this direction the first open problem is to determine the size of a maximum scattered (i.e. $(1,1)_q$-evasive) subspace in $V(3,q^5)$. From the Blokhuis--Lavrauw bound it follows that scattered subspaces of $V(3,q^5)$ have dimension at most $\lfloor (3\cdot5)/2 \rfloor=7$. 
	In Section \ref{Sec:5} we present scattered subspaces with this dimension for infinite many values of $q$. 
	We obtain these examples in characteristics $2$, $3$ and $5$. Note that these are the first 
	non-trivial examples (i.e. $n>3$) for scattered subspaces of dimension $\lfloor rn/2 \rfloor$ in $V(r,q^n)$ when $rn$ is odd. 
	To obtain these examples we use MAGMA and combine $q$-subresultants \cite{csajb} with the strategy of Ball, Blokhuis and Lavrauw from \cite{BBL2000} where the authors construct maximum scattered subspaces of dimension $6$ in $V(3,q^4)$. Thanks to the dualities described in Section \ref{DD}, our examples yield maximum evasive subspaces for many other parameters as well.
	
	\section{General properties and existence results}
	\label{Sec:2}

	In this section we prove basic properties of evasive subspaces and present some construction methods. 
	Some of these results are inductive and depend on already existing constructions as an input.
	We start by presenting the simplest construction of scattered subspaces w.r.t. hyperplanes. They correspond to MRD-codes known as Gabidulin codes, see \cite[Section 3.2]{SheekeySurvey}. 
	
	\begin{example}
		\label{Gabidulin}
		If $n\geq r$, then the $n$-dimensional $\F_q$-subspace
		\[\{(x,x^q,\ldots,x^{q^{r-1}}) : x \in \F_{q^n}\}\]
		is maximum $(r-1,r-1)_q$-evasive in $\F_{q^n}^r$.
	\end{example}
	
	The following example defines subgeometries $\PG(r,q^m)$ in $\PG(r,q^n)$.
	
	\begin{example}
		\label{subgeom}
If $m \mid n$ then the $mr$-dimensional $\F_q$-subspace
		\[\{(x_1,x_2,\ldots,x_r) : x_i \in \F_{q^m}\}\]
		is $(h,mh)_q$-evasive in $\F_{q^n}^r$ for each $h$. 
	\end{example}
	
	\medskip
	
	Next we collect what is known about maximum $(h,h)_q$-evasive subspaces. 
	
	\begin{result}
		\label{result}
		\begin{enumerate}
			\item If $h+1 \mid r$ and $n\geq h+1$, then maximum $(h,h)_q$-evasive subspaces of $V(r,q^n)$ have dimension $rn/(h+1)$, cf. \cite[Theorem 2.7]{CsMPZ2019};
			\item if $rn$ is even, then maximum $(1,1)_q$-evasive subspaces of $V(r,q^n)$ have dimension $rn/2$, cf. \cite{BBL2000, BGMP2018, BL2000, CsMPZ2017};
			\item if $rn$ is even, then maximum $(n-3,n-3)_q$-evasive subspaces of $V(r(n-2)/2,q^n)$ have dimension $rn/2$, cf. \cite[Corollary 3.5]{CsMPZ2019};
			\item in $V(r,q^n)$ the $h$-scattered subspaces of dimension $rn/(h+1)$ are $(r-1,rn/(h+1)-n+h)_q$-evasive, cf. \cite{BL2000} for $h=1$ and \cite{CsMPZ2019} for $h>1$.
		\end{enumerate}
	\end{result}
	
	Next recall Examples \ref{Guru1} and \ref{Guru2} of Guruswami \cite[Section B]{GWX} which are special (linear) cases of the construction of Dvir and Lovett \cite[Theorem 3.2]{DL2012}. 
	Suppose $n \geq r$ and take distinct elements $\gamma_1,\gamma_2,\ldots,\gamma_r \in \F_{q^n}^*$.
	For $1\leq i \leq h$ define
	\[f_i(x_1,x_2,\ldots,x_r)=\sum_{j=1}^{r} \gamma_j^i x_j^{q^{r-j}}.\]
	
	\begin{example}
		\label{Guru1}
		The $\F_q$-subspace 
		\[V_{\F_{q^n}}(f_1,f_2,\ldots,f_h):=\{(x_1,x_2,\ldots,x_r) : f_i(x_1,x_2,\ldots,x_r)=0\mbox{ for } i=1,\ldots,h\}\]
		is $(k,(r-1)k)_q$-evasive of dimension $n(r-h)$ for every $1\leq k\leq h<r$ in $\F_{q^n}^r$.
		The fact that $\la V_{\F_{q^n}}(f_1,f_2,\ldots,f_h) \ra_{\F_{q^n}}=\F_{q^n}^r$ is left as an exercise. 
		Note that when $r=2$ (and hence $h=k=1$) then $V_{\F_{q^n}}(f_1)$ is equivalent to Example \ref{Gabidulin}.
	\end{example}
	
	\begin{example}
		\label{Guru2}
		The direct sum of $s$ copies of $V_{\F_{q^n}}(f_1,f_2,\ldots,f_h)$ in $\F_{q^n}^r\oplus\ldots\oplus\F_{q^n}^r \cong V(rs,q^n)$ is a $(k,(r-1)k)_q$-evasive subspace of dimension $sn(r-h)$ for every $1\leq k\leq h<r$.
		
		Thus for every divisor $m$ of $r'$, in $V(r',q^n)$ there exist $(k,(m-1)k)_q$-evasive subspaces of dimension
		$nr'-hnr'/m$ for every $1\leq k \leq h <m$. 
		
		Note that when $m=2$ (and hence $h=k=1$) we obtain maximum scattered subspaces of $V(r,q^n)$, $r$ even, which are direct sum of the maximum scattered subspaces of $V(2,q^n)$ arising from Example \ref{Gabidulin}. It is equivalent to the construction of Lavrauw \cite[pg. 26]{LThesis}. 
	\end{example}
	
	\begin{proposition}
		\label{scendere}
		If $U$ is a $(h,k)_q$-evasive subspace in $V(r,q^n)$, then it is also $(h-s,k-s)_q$-evasive.
	\end{proposition}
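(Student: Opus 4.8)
The plan is to reduce to the case $s=1$ and then iterate. For the base case, suppose $U$ is $(h,k)_q$-evasive and let $W$ be any $(h-1)$-dimensional $\F_{q^n}$-subspace of $V$. By hypothesis $\dim_{q^n}\la U \ra_{\F_{q^n}} \geq h > h-1 = \dim_{q^n} W$, so $\la U \ra_{\F_{q^n}}$ is not contained in $W$ and there exists a vector $u \in U$ with $u \notin W$.

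Set $W' := W + \la u \ra_{\F_{q^n}}$, an $h$-dimensional $\F_{q^n}$-subspace containing $W$. The first key step is to observe that passing from $W$ to $W'$ forces the $\F_q$-dimension of the intersection with $U$ to grow by at least one: indeed $u \in U \cap W'$ but $u \notin W \supseteq U \cap W$, hence $u \in (U\cap W')\setminus(U\cap W)$ and so
\[\dim_q(U \cap W') \geq \dim_q(U \cap W) + 1.\]
The second key step is to apply the defining property of an $(h,k)_q$-evasive subspace to the $h$-dimensional subspace $W'$, giving $\dim_q(U\cap W') \leq k$. Combining the two inequalities yields $\dim_q(U \cap W) \leq k-1$. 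Since $W$ was an arbitrary $(h-1)$-dimensional $\F_{q^n}$-subspace, and $\dim_{q^n}\la U\ra_{\F_{q^n}} \geq h \geq h-1$, this proves that $U$ is $(h-1,k-1)_q$-evasive.

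For the general statement I would iterate this reduction $s$ times. After the $i$-th application $U$ is $(h-i,k-i)_q$-evasive, and performing the next step requires only $\dim_{q^n}\la U\ra_{\F_{q^n}} \geq h-i$, which is guaranteed by the original hypothesis $\dim_{q^n}\la U\ra_{\F_{q^n}} \geq h$. After $s$ steps one obtains that $U$ is $(h-s,k-s)_q$-evasive, as claimed.

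The argument presents essentially no obstacle, as the property is monotone in its parameters. The only point requiring care is the verification that the span condition $\dim_{q^n}\la U\ra_{\F_{q^n}} \geq h$ is precisely what guarantees, at each step, the existence of the extra vector $u \in U \setminus W$; this is exactly where the dimension hypothesis built into Definition \ref{maindef} is used, and it is what forces the intersection dimension to increase by one for each additional $\F_{q^n}$-dimension.
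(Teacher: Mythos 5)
Your proof is correct and follows essentially the same argument as the paper: reduce to $s=1$, pick $u \in U \setminus W$ using the span condition $\dim_{q^n}\la U\ra_{\F_{q^n}} \geq h$, extend $W$ to the $h$-dimensional subspace $\la u, W\ra_{\F_{q^n}}$, and observe that the intersection with $U$ gains at least one $\F_q$-dimension, so evasiveness at level $h$ forces $\dim_q(U \cap W) \leq k-1$. The only cosmetic difference is that the paper phrases this as a proof by contradiction while you argue directly.
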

	\begin{proof}
		It is enough to prove the result for $s=1$ and then apply induction.
		Suppose for the contrary that there exists an $(h-1)$-dimensional $\F_{q^n}$-subspace $H$  meeting $U$ in at least $q^{k}$ vectors.
		Since $\la U \ra_{\F_{q^n}}$ is not contained in $H$, there exists ${\bf u}\in U \setminus H$ and hence $\la {\bf u}, H\ra_{q^n}$ meets $U$ in at least $q^{k+1}$ vectors, a contradiction.
	\end{proof}
	
	\begin{proposition}
		\label{banale}
		If there exists an $(h,k)_q$-evasive subspace $U$ of dimension $t$ in $V(r,q^n)$, then
		there also exists an $(h,k+s)_q$-evasive subspace of dimension $t+s$ for each $0\leq s \leq rn-t$.
	\end{proposition}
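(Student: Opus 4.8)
The plan is to obtain the larger evasive subspace by simply enlarging $U$ inside $V$ and checking that the evasiveness parameter degrades by exactly the number of appended dimensions. First I would fix any $\F_q$-subspace $W$ with $U \subseteq W \subseteq V$ and $\dim_q W = t+s$. Such a $W$ exists precisely because $0 \le s \le rn - t$: since $V$ has $\F_q$-dimension $rn$, there is enough room above $U$ to adjoin $s$ further basis vectors. The spanning requirement of Definition \ref{maindef} is then immediate, as $\la W \ra_{\F_{q^n}} \supseteq \la U \ra_{\F_{q^n}}$ already has dimension at least $h$ over $\F_{q^n}$.

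The only real content is the intersection bound. Let $H$ be an arbitrary $h$-dimensional $\F_{q^n}$-subspace of $V$; I would bound $\dim_q(W \cap H)$ by comparing it with $\dim_q(U \cap H)$, which is at most $k$ by hypothesis. Consider the canonical projection $\pi \colon W \to W/U$, an $\F_q$-space of dimension $s$, and restrict it to $W \cap H$. Since $U \subseteq W$, the kernel of $\pi|_{W \cap H}$ equals $(W \cap H) \cap U = U \cap H$, so rank--nullity yields
\[ \dim_q(W \cap H) = \dim_q(U \cap H) + \dim_q \pi(W \cap H) \le k + s, \]
because $\pi(W \cap H)$ sits inside the $s$-dimensional space $W/U$. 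As $H$ was arbitrary, $W$ is $(h,k+s)_q$-evasive of dimension $t+s$, which is the assertion.

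There is no serious obstacle here; the argument is essentially a one-line dimension count. The only point deserving a little care is the identification of the kernel of $\pi|_{W\cap H}$ with $U \cap H$, which relies on the inclusion $U \subseteq W$. Once that is in place the bound $k+s$ falls out immediately, and one can further note as a sanity check that the two boundary cases $s=0$ (where $W=U$) and $s=rn-t$ (where $W=V$ and $\dim_q(V\cap H)=hn\le k+rn-t$, forced by the modular dimension inequality in $V$) are both consistent with the claimed estimate.
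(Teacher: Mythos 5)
Your proof is correct, but it takes a genuinely different route from the paper's. The paper adjoins a single vector $\mathbf{w}\notin U$ at a time: it shows $\la U,\mathbf{w}\ra_{\F_q}$ is $(h,k+1)_q$-evasive by contradiction, taking $k+2$ independent elements $\mathbf{w}+\mathbf{u}_1,\ldots,\mathbf{w}+\mathbf{u}_{k+2}$ in the intersection with an $h$-dimensional $\F_{q^n}$-subspace $H$ and observing that the differences $\mathbf{u}_i-\mathbf{u}_{k+2}$ yield $k+1$ independent vectors of $U\cap H$; the full statement then follows by induction on $s$. You instead pass to an arbitrary overspace $W\supseteq U$ of dimension $t+s$ in one step and bound $\dim_q(W\cap H)$ by rank--nullity applied to the restriction of the quotient map $W\to W/U$, whose kernel on $W\cap H$ is exactly $U\cap H$ because $U\subseteq W$. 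Both arguments rest on the same dimension count (enlarging by $s$ dimensions can increase any intersection by at most $s$), but yours avoids induction entirely, and the paper's ``differences'' trick is essentially a hands-on version of your kernel identification. Your version also yields the slightly stronger fact that \emph{every} $\F_q$-subspace between $U$ and $V$ of dimension $t+s$ is $(h,k+s)_q$-evasive, not merely that one exists; and your closing sanity check for $s=rn-t$ is sound, since $hn\le k+rn-t$ indeed follows from the general codimension bound $\dim_q(U\cap H)\ge t-(r-h)n$ applied to the evasive subspace $U$ itself.
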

	\begin{proof}
		Take ${\bf w}\notin U$. The $\F_q$-subspace $\la U,{\bf w}\ra_{\F_q}$ is $(h,k+1)_q$-evasive of dimension $t+1$. Indeed, suppose for the contrary that there exist ${\bf u_i} \in U$ such that
		\[{\bf w+u_1, w+u_2, \ldots, w+u_{k+2}},\] 
		are $\F_q$-linearly independent elements contained in the same $h$-dimensional $\F_{q^n}$-subspace $H$ of $V$. Then ${\bf u_1-u_{k+2}, u_2-u_{k+2}, \ldots, u_{k+1}-u_{k+2}}$ are $k+1$ $\F_q$-linearly independent elements of $U$ in $H$, contradicting the fact that $U$ is $(h,k)_q$-evasive. The result follows by induction. 
	\end{proof}

	In Proposition \ref{banale}, 
	starting from an evasive subspace of $V(r,q^n)$, we construct another evasive subspace in the same vector space. 
	In the next result we construct an evasive subspace by enlarging an evasive subspace lying in a hyperplane of $V(r,q^n)$.
	
	\begin{proposition}
		\label{onebyone}
		If there exists an $(r-1,k)_q$-evasive subspace of dimension $d$ in $V(r,q^n)$, then for a positive integer $s$ with $d-k \leq s \leq n$ in $V(r+1,q^n)$ there exists an $(r,k+s)_q$-evasive subspace of dimension $d+s$.
	\end{proposition}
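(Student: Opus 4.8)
The plan is to realize $V(r,q^n)$ as a hyperplane of $V(r+1,q^n)$ and to enlarge the given evasive subspace by attaching $s$ independent directions along the new coordinate. Concretely, write $V(r+1,q^n)=V\oplus\la e\ra_{\F_{q^n}}$ with $V=V(r,q^n)$, let $U\subseteq V$ be the hypothesised $(r-1,k)_q$-evasive subspace of dimension $d$, and fix an $s$-dimensional $\F_q$-subspace $S$ of $\la e\ra_{\F_{q^n}}\cong\F_{q^n}$; such an $S$ exists precisely because $s\le n$. I would then set $W:=U\oplus S$, an $\F_q$-subspace of dimension $d+s$ (the sum is direct since $U\subseteq V$, $S\subseteq\la e\ra_{\F_{q^n}}$ and $V\cap\la e\ra_{\F_{q^n}}=\{{\bf 0}\}$), and verify that $W$ is $(r,k+s)_q$-evasive.

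First I would check the spanning condition. Since $S$ is non-zero we have $\la S\ra_{\F_{q^n}}=\la e\ra_{\F_{q^n}}$, and as $\la U\ra_{\F_{q^n}}\subseteq V$ has $\F_{q^n}$-dimension at least $r-1$ while $e\notin V$, it follows that $\dim_{q^n}\la W\ra_{\F_{q^n}}\ge r$. The core of the argument is then to bound $\dim_q(W\cap H)$ for an arbitrary $r$-dimensional $\F_{q^n}$-subspace $H$ of $V(r+1,q^n)$, splitting into two cases according to whether $H$ contains the line $\la e\ra_{\F_{q^n}}$.

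In the first case $\la e\ra_{\F_{q^n}}\subseteq H$, so $H\cap V$ is a hyperplane of $V$ and, because $S\subseteq H$, one computes $W\cap H=(U\cap H)\oplus S$; the $(r-1,k)_q$-evasiveness of $U$ gives $\dim_q(U\cap H)=\dim_q\bigl(U\cap(H\cap V)\bigr)\le k$, whence $\dim_q(W\cap H)\le k+s$. In the second case $e\notin H$, so $H$ is the graph $\{v+\mu(v)e:v\in V\}$ of an $\F_{q^n}$-linear form $\mu\colon V\to\F_{q^n}$; writing $S=\{\beta e:\beta\in B\}$ for an $s$-dimensional $\F_q$-subspace $B$ of $\F_{q^n}$, a vector $u+\beta e$ with $u\in U$, $\beta\in B$ lies in $H$ exactly when $\beta=\mu(u)$, so $W\cap H$ is $\F_q$-isomorphic to $U_B:=\{u\in U:\mu(u)\in B\}$ and
\[\dim_q(W\cap H)=\dim_q U_B=\dim_q(U\cap\ker\mu)+\dim_q\bigl(B\cap\mu(U)\bigr).\]

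I expect the genuine obstacle to be the degenerate subcase $\mu=0$ of the second case, that is $H=V$ itself: there $U_B=U$ and $\dim_q(W\cap H)=d$, so the required bound $\dim_q(W\cap H)\le k+s$ forces exactly the hypothesis $s\ge d-k$. This is the only place where the lower bound on $s$ enters, and it is easy to overlook. When $\mu\neq 0$ the kernel $\ker\mu$ is a genuine hyperplane of $V$, so $\dim_q(U\cap\ker\mu)\le k$ by evasiveness, while $\dim_q\bigl(B\cap\mu(U)\bigr)\le\dim_q B=s$, again yielding $\dim_q(W\cap H)\le k+s$. Collecting all cases shows that $W$ is $(r,k+s)_q$-evasive of dimension $d+s$, as claimed.
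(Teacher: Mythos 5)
Your construction is exactly the paper's: embed $V(r,q^n)$ as a hyperplane of $V(r+1,q^n)$, take an $s$-dimensional $\F_q$-subspace of a line $\la e\ra_{\F_{q^n}}$ spanned by a vector outside that hyperplane, and form the direct sum with the given $(r-1,k)_q$-evasive subspace. The paper merely asserts this subspace ``will be sufficient for our purposes,'' while you correctly carry out the verification it omits --- including the key observation that the degenerate case $H=V$ is precisely where the hypothesis $s\ge d-k$ is needed --- so your proposal is a correct completion of the same approach.
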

	\begin{proof}
		Let $W$ be an $(r-1,k)_q$-evasive subspace of dimension $d$ in $V(r,q^n)$. Embed $V(r,q^n)$ as a hyperplane of $V(r+1,q^n)$ and take a vector ${\bf v}\notin V(r,q^n)$. Let $W'$ be an $\F_q$-subspace of $\la {\bf v}\ra_{q^n}$ of dimension $s$. The $\F_q$-subspace $W\oplus W'$ of $V(r+1,q^n)$ will be sufficient for our purposes.
	\end{proof}

	\begin{theorem}
		\label{dirsum}
		If $U_i$ is an $(h,k_i)_q$-evasive subspace of $V_i=V(r_i,q^n)$ for $i=1,2$, then $U=U_1 \oplus U_2$ is
		$(h,k_1+k_2-h)_q$-evasive  in $V=V_1 \oplus V_2$.
	\end{theorem}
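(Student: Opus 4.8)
The plan is to fix an arbitrary $h$-dimensional $\F_{q^n}$-subspace $H$ of $V=V_1\oplus V_2$ and bound $\dim_{\F_q}(H\cap U)$ by decomposing $W:=H\cap U$ through the projection $\pi_1\colon V\to V_1$ onto the first factor, applying the ``going down'' Proposition \ref{scendere} to $U_1$ and to $U_2$ separately. First I would dispose of the span condition: since $\langle U_1\rangle_{\F_{q^n}}$ already has $\F_{q^n}$-dimension at least $h$, so does $\langle U\rangle_{\F_{q^n}}\supseteq\langle U_1\rangle_{\F_{q^n}}$, so $U$ is a legitimate candidate for an $(h,\cdot)_q$-evasive subspace.

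Next I would analyze how $H$ distributes across the two factors. Set $H_2=H\cap(\{0\}\oplus V_2)$, which is precisely the kernel of $\pi_1|_H$, and write $b=\dim_{\F_{q^n}}H_2$ and $a=\dim_{\F_{q^n}}\pi_1(H)$. Rank--nullity over $\F_{q^n}$ then gives $a+b=h$. The point to extract here is that $\pi_1(H)$ is an $a$-dimensional $\F_{q^n}$-subspace of $V_1$, while $H_2$, identified with its image in $V_2$, is a $b$-dimensional $\F_{q^n}$-subspace of $V_2$.

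Then I would split $W=H\cap U$ by the same projection. Because $W\subseteq U=U_1\oplus U_2$, we have $\pi_1(W)\subseteq\pi_1(H)\cap U_1$, and $\ker(\pi_1|_W)=W\cap(\{0\}\oplus V_2)=H_2\cap(\{0\}\oplus U_2)$, i.e. the intersection of the $b$-dimensional subspace $H_2$ with $U_2$. Applying Proposition \ref{scendere} to $U_1$ with $s=h-a$ (so that $U_1$ is $(a,k_1-h+a)_q$-evasive) yields $\dim_{\F_q}\pi_1(W)\le k_1-h+a$, and applying it to $U_2$ with $s=h-b$ (so that $U_2$ is $(b,k_2-h+b)_q$-evasive) yields $\dim_{\F_q}\ker(\pi_1|_W)\le k_2-h+b$. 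Rank--nullity over $\F_q$ then gives $\dim_{\F_q}W=\dim_{\F_q}\pi_1(W)+\dim_{\F_q}\ker(\pi_1|_W)\le (k_1-h+a)+(k_2-h+b)$, and substituting $a+b=h$ collapses this to $k_1+k_2-h$, exactly as claimed.

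The only point requiring care is the bookkeeping in the two invocations of Proposition \ref{scendere}, especially the degenerate cases $a=0$ or $b=0$ (where $\pi_1(H)$ or $H_2$ is trivial): in those cases the relevant image or kernel of $W$ is the zero space, and one should check this is consistent with the bounds $k_1-h+a\ge 0$ and $k_2-h+b\ge 0$, which hold because $h\le k_i$ for any $(h,k_i)_q$-evasive subspace. Beyond this, the argument is just the rank--nullity splitting of $W$, so I do not expect a genuine obstacle.
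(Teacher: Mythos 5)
Your proof is correct, and while it uses the same two ingredients as the paper's proof --- the direct-sum structure of $U$ and a double application of Proposition \ref{scendere} --- it packages them in a genuinely different, and cleaner, way. The paper argues by contradiction: assuming an $h$-dimensional $\F_{q^n}$-subspace $W$ with $\dim_q(W\cap U)\geq k_1+k_2-h+1$, it sets $s=\dim_{q^n}(W\cap V_1)$, bounds $\dim_q(W\cap U_1)\leq k_1-h+s$ via Proposition \ref{scendere}, and then passes to the quotient $V/V_1\cong V_2$, where the auxiliary subspace $\bar U_1=\la U_1,\,W\cap U\ra_{\F_q}$ and the Grassmann formula yield incompatible upper and lower bounds ($k_2-s$ versus $k_2+1-s$) for the dimension of $\bar U_1+V_1$. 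You instead prove the bound directly: rank--nullity for $\pi_1$ restricted to $H\cap U$ splits $\dim_q(H\cap U)$ into $\dim_q\pi_1(H\cap U)+\dim_q(H\cap U\cap V_2)$, and the two summands are controlled by Proposition \ref{scendere} applied to $U_1$ (via the $a$-dimensional subspace $\pi_1(H)$) and to $U_2$ (via the $b$-dimensional subspace $H\cap V_2$), with $a+b=h$. Since quotienting by $V_1$ is the same as projecting onto $V_2$, your decomposition is essentially the mirror image of the paper's, with the roles of the two factors swapped; what your route buys is the elimination of the contradiction framing, the quotient-space bookkeeping, the auxiliary subspace $\bar U_1$ and the Grassmann computation, all replaced by a single application of rank--nullity. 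Your remark on the degenerate cases $a=0$ or $b=0$, resolved by $k_i\geq h$, is exactly the right point of care; it plays the same role as the paper's opening observation that $k_i\geq h$ for $i=1,2$.
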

	\begin{proof}
		Recall that $k_i\geq h$, for $i=1,2$, as we explained after Definition \ref{maindef}. By way of contradiction suppose that there exists an $h$-dimensional $\F_{q^n}$-subspace $W$ of $V$ such that
		\begin{equation}
			\label{eq1}
			\dim_{q}(W\cap U)\geq k_1+k_2-h+1.
		\end{equation}
		Clearly, $W$ cannot be contained in $V_i$ since $U_i$ is $(h,k_i)_q$-evasive in $V_i$
		and $k_1+k_2-h+1$ is larger than both $k_1$ and $k_2$.
		Let $W_1:=W\cap V_1$ and $s:=\dim_{q^n}W_1$. Then $s < h$ and by Proposition \ref{scendere}, the $\F_q$-subspace $U_1$ is $(s,k_1-h+s)_q$-evasive in $V_1$, thus 
		\begin{equation}
		\label{eq2}
			\dim_{q} (U_1 \cap W_1) \leq k_1-h+s.
		\end{equation}
		Denoting
		$\la U_1, W\cap U\ra_{\F_{q}}$ by $\bar U_1$, the Grassmann formula yields
		\[\dim_{q} \bar{U}_1-\dim_{q} U_1
		=\dim_{q}(W \cap U)-\dim_{q}(W \cap U_1)\]
		and hence by \eqref{eq1} and \eqref{eq2}
		\begin{equation}\label{eq4}
			\dim_{q} \bar{U}_1-\dim_{q} U_1
			\geq (k_1+k_2-h+1)-(k_1-h+s)=k_2+1-s.
		\end{equation}
		Consider the subspace $T:=W+V_1$ of the quotient space $V/V_1\cong V_2$. Then $\dim_{q^n}T=h-s$ and $T$ contains the $\F_q$-subspace $M:=\bar U_1+V_1$.
		Since $M$ is also contained in the $\F_q$-subspace $U+V_1=U_2+V_1$ of $V/V_1$, then $M$ is $(h,k_2)_q$-evasive in $V/V_1$ and hence also $(h-s,k_2-s)_q$-evasive. Hence $\dim_{q}(M \cap T)\leq k_2-s$.
		
		On the other hand,
		\[\dim_{q}(M\cap T)=\dim_{q} M=\dim_{q} \bar U_1-\dim_{q} (\bar U_1\cap V_1)\geq\]
		\[\dim_{q} \bar U_1 -\dim_{q}(U\cap V_1)=\dim_{q} \bar U_1 -\dim_{q} U_1,\]
		and hence, by \eqref{eq4},
		\[k_2-s \geq \dim_{q}(M\cap T)\geq k_2+1-s;\]
		a contradiction.
	\end{proof}
	
	Note that Example \ref{Guru2} is obtained from Example \ref{Guru1} by considering direct sum of $(h,(r-1)h)_q$-evasive subspaces so that their sum is evasive with the same parameters. By Theorem \ref{dirsum} we would get only an $(h,(r-1)h+(r-2)h)_q$-evasive subspace. The reason behind this is the  additional property of Example \ref{Guru1}, i.e. the fact that it is $(k,(r-1)k)_q$-evasive for each $1\leq k \leq h$. Assuming a similar hypothesis, and copying either the proof of Theorem \ref{dirsum} or the proof of \cite[Claim 3.4]{DL2012} one can prove the following.
	
	\begin{theorem}
		\label{dirsum1}
		If $U_i$ is a $(t,\lambda t)_q$-evasive subspace of $V_i=V(r_i,q^n)$ with $i=1,2$ for each $1\leq t\leq h$ and for some positive integer $\lambda$, then $U=U_1 \oplus U_2$ is $(t,\lambda t)_q$-evasive in $V=V_1 \oplus V_2$ for each $1\leq t\leq h$.
	\end{theorem}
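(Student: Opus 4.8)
The plan is to fix a target dimension $t$ with $1\leq t\leq h$ and run the argument of Theorem \ref{dirsum} essentially verbatim, with one decisive change: wherever that proof invokes Proposition \ref{scendere} to descend from the top level, I would instead quote the hypothesis \emph{directly} at the relevant intermediate level. This is possible precisely because $U_1$ and $U_2$ are assumed $(t',\lambda t')_q$-evasive for \emph{every} $1\leq t'\leq h$ with the \emph{same} multiplier $\lambda$. So I would argue by contradiction: suppose there is a $t$-dimensional $\F_{q^n}$-subspace $W$ of $V$ with $\dim_q(W\cap U)\geq \lambda t+1$. Set $W_1:=W\cap V_1$ and $s:=\dim_{q^n}W_1\leq t$. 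If $s=t$ then $W\subseteq V_1$, whence $W\cap U=W\cap U_1$ and $\dim_q(W\cap U)\leq \lambda t$ by the $(t,\lambda t)_q$-evasiveness of $U_1$, a contradiction; so I may assume $s<t$.

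The heart of the improvement is the bound on $\dim_q(U_1\cap W_1)$. In Theorem \ref{dirsum} this was obtained from $k_1=\lambda h$ via Proposition \ref{scendere}, giving the weaker value $\lambda h-h+s$; here I would instead use that $U_1$ is $(s,\lambda s)_q$-evasive (the hypothesis at level $s$, the case $s=0$ being trivial since then $W_1=0$), so that $\dim_q(U_1\cap W_1)\leq \lambda s$. Writing $\bar U_1:=\la U_1,\,W\cap U\ra_{\F_q}$ and noting $U_1\cap W=U_1\cap W_1$, the Grassmann identity yields $\dim_q\bar U_1-\dim_q U_1=\dim_q(W\cap U)-\dim_q(U_1\cap W_1)$, and hence
\[
\dim_q\bar U_1-\dim_q U_1\ \geq\ (\lambda t+1)-\lambda s\ =\ \lambda(t-s)+1.
\]

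I would then pass to the quotient $V/V_1\cong V_2$. Let $T$ and $M$ be the images of $W$ and of $\bar U_1$; since $\bar U_1\subseteq V_1+W$ one has $M\subseteq T$, and $\dim_{q^n}T=t-s$. Because $\bar U_1\subseteq U$ and $U\cap V_1=U_1$, the intersection $\bar U_1\cap V_1$ equals $U_1$, so $\dim_q M=\dim_q\bar U_1-\dim_q U_1\geq\lambda(t-s)+1$. On the other hand $\bar U_1\subseteq U=U_1\oplus U_2$ forces $M$ into the image of $U_2$, which is a copy of $U_2$ in $V/V_1$ and is therefore $(t-s,\lambda(t-s))_q$-evasive (note $1\leq t-s\leq t\leq h$, so the hypothesis applies at level $t-s$). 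As $M\subseteq T$ and $\dim_{q^n}T=t-s$, this gives $\dim_q M=\dim_q(M\cap T)\leq\lambda(t-s)$, contradicting the previous inequality.

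The only genuine subtlety—and the step I would watch most carefully—is confirming that both levels appearing in the estimate, namely $s$ and $t-s$, stay within the admissible range $[1,h]$ (respectively $\{0\}\cup[1,h]$ for $s$), so that the shared multiplier $\lambda$ enters with matching coefficients $\lambda s$ and $\lambda(t-s)$ summing exactly to $\lambda t$. This balancing is what the hypothesis ``for each $1\leq t\leq h$'' is designed to furnish, and it is exactly the feature missing in the weaker conclusion $(t,(2\lambda-1)t)_q$ that a blind application of Theorem \ref{dirsum} would produce. The bookkeeping in the quotient (identifying $\bar U_1\cap V_1$ with $U_1$ and verifying $M\subseteq T$) is routine and can be copied from the proof of Theorem \ref{dirsum} with no change.
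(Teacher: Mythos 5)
Your proof is correct and is exactly what the paper intends: the paper does not write out a proof of Theorem \ref{dirsum1} but simply says to copy the proof of Theorem \ref{dirsum} under the stronger hypothesis, and your argument does precisely that, replacing each invocation of Proposition \ref{scendere} (at levels $s$ and $t-s$) by the hypothesis itself so that the two bounds $\lambda s$ and $\lambda(t-s)$ sum exactly to $\lambda t$. The bookkeeping (the case $s=t$, the case $s=0$, the identifications $U\cap V_1=U_1$, $\bar U_1\cap V_1=U_1$, and $M\subseteq T$ in the quotient) is all handled correctly.
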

	
	With $V=1$ then the result above states that the direct sum of two $t$-scattered subspaces is again $t$-scattered, see \cite[Theorem 2.5]{CsMPZ2019}. 
	When $h=1$ then Theorem \ref{dirsum1} is just a special case of the following result, which also improves on Theorem \ref{dirsum}.
	
	\begin{theorem}
		\label{directsumpoints}
		If $U_i$ is a $(1,k_i)_q$-evasive subspace of $V_i=V(r_i,q^n)$ for $i=1,2$, then $U=U_1 \oplus U_2$ is a
		$\left(1,\max\{k_1,k_2\}\right)_q$-evasive subspace of $V=V_1 \oplus V_2$.
	\end{theorem}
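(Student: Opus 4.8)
The plan is to test the evasiveness condition on a single one-dimensional $\F_{q^n}$-subspace $W$ of $V$ and to track how its intersection with $U$ splits across the two summands. Write $W=\la {\bf w}\ra_{\F_{q^n}}$ with ${\bf w}=({\bf w}_1,{\bf w}_2)$, where ${\bf w}_i\in V_i$, and set $k:=\max\{k_1,k_2\}$. First I would dispose of the degenerate cases in which one component vanishes. If ${\bf w}_1={\bf 0}$ then $W\subseteq V_2$, so $W\cap U=W\cap U_2$ and the $(1,k_2)_q$-evasiveness of $U_2$ gives $\dim_q(W\cap U)\le k_2\le k$; the case ${\bf w}_2={\bf 0}$ is symmetric. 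It is precisely these boundary lines that prevent the bound from dropping below $k$.

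For the generic case, where both ${\bf w}_1$ and ${\bf w}_2$ are nonzero, the key device is to parametrise $W$ by the scalar field. Consider the $\F_q$-linear injection $\psi\colon\F_{q^n}\to V$, $\lambda\mapsto\lambda{\bf w}$, which is injective because ${\bf w}\neq{\bf 0}$ and whose image is $W$. A point $\lambda{\bf w}$ lies in $U=U_1\oplus U_2$ exactly when $\lambda{\bf w}_1\in U_1$ and $\lambda{\bf w}_2\in U_2$ hold simultaneously, so I would introduce
\[
A_i:=\{\lambda\in\F_{q^n}:\lambda{\bf w}_i\in U_i\},\qquad i=1,2,
\]
and note that each $A_i$ is an $\F_q$-subspace of $\F_{q^n}$, a fact that follows from the $\F_q$-linearity of $U_i$ alone.

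The crucial estimate is $\dim_q A_i\le k_i$. Since ${\bf w}_i\neq{\bf 0}$, the map $\lambda\mapsto\lambda{\bf w}_i$ is an $\F_q$-linear injection of $\F_{q^n}$ into $V_i$ with image $\la {\bf w}_i\ra_{\F_{q^n}}$, and it carries $A_i$ bijectively onto $\la {\bf w}_i\ra_{\F_{q^n}}\cap U_i$. As $\la {\bf w}_i\ra_{\F_{q^n}}$ is a one-dimensional $\F_{q^n}$-subspace of $V_i$, the $(1,k_i)_q$-evasiveness of $U_i$ yields $\dim_q A_i=\dim_q(\la {\bf w}_i\ra_{\F_{q^n}}\cap U_i)\le k_i$. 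Because $\psi$ is injective we have $W\cap U=\psi(A_1\cap A_2)$, and therefore
\[
\dim_q(W\cap U)=\dim_q(A_1\cap A_2)\le\min\{\dim_q A_1,\dim_q A_2\}\le\min\{k_1,k_2\}\le k.
\]
Combining this with the degenerate cases covers every one-dimensional $\F_{q^n}$-subspace of $V$; the span condition $\dim_{q^n}\la U\ra_{\F_{q^n}}\ge1$ is immediate since $U\supseteq U_1\neq\{{\bf 0}\}$.

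I expect no serious obstacle: the entire argument rests on the observation that intersecting a line with a direct sum is controlled by the intersection $A_1\cap A_2$ of two small subspaces of the scalar field $\F_{q^n}$, whose dimension is bounded by the \emph{minimum} of $k_1$ and $k_2$. The only point requiring care is recognising why the stated bound is $\max\{k_1,k_2\}$ rather than $\min\{k_1,k_2\}$: the larger value is genuinely needed and is attained exactly on the lines contained entirely in one summand $V_i$, which is where the generic argument does not apply and the single factor $U_i$ can be met in an $\F_q$-subspace of dimension up to $k_i$.
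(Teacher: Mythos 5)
Your proof is correct, but it takes a genuinely different route from the paper's. The paper argues by contradiction: assuming (w.l.o.g. $k_2\geq k_1$) that some line $X$ meets $U$ in dimension at least $k_2+1$, it forms the $(r_1+1)$-dimensional $\F_{q^n}$-subspace $T=\la V_1,X\ra_{\F_{q^n}}$, observes that $Y=T\cap V_2$ is a line of $V_2$, and uses the Grassmann formula applied to $T'=\la U_1, X\cap U\ra_{\F_q}$ to push the large intersection $X\cap U$ into an intersection $T'\cap U_2\subseteq Y\cap U_2$ of dimension at least $k_2+1$, contradicting the evasiveness of $U_2$ --- in effect a projection of $X$ onto $V_2$ along $V_1$. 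You instead parametrize the line $W=\la{\bf w}\ra_{\F_{q^n}}$ by the scalar field, split ${\bf w}=({\bf w}_1,{\bf w}_2)$, and reduce $W\cap U$ to the intersection $A_1\cap A_2$ of two $\F_q$-subspaces of $\F_{q^n}$ with $\dim_q A_i\leq k_i$; all the steps (the identification $\lambda{\bf w}\in U\Leftrightarrow\lambda{\bf w}_i\in U_i$ for $i=1,2$, the bound $\dim_q A_i\leq k_i$ via the injection $\lambda\mapsto\lambda{\bf w}_i$, and the degenerate cases ${\bf w}_i={\bf 0}$) check out. Your argument is more elementary --- no contradiction, no Grassmann formula --- and it proves something slightly sharper: every line \emph{not} contained in $V_1\cup V_2$ meets $U$ in dimension at most $\min\{k_1,k_2\}$, so the value $\max\{k_1,k_2\}$ is forced only by the lines lying inside one summand, a point your write-up isolates cleanly (the paper's method could also yield this by symmetry, but does not state it). What the paper's projection-style argument buys in exchange is uniformity of technique: it runs parallel to the proof of Theorem \ref{dirsum} and adapts to $h$-dimensional $\F_{q^n}$-subspaces with $h>1$, where a subspace cannot be parametrized by a single scalar $\lambda\in\F_{q^n}$, so your trick is genuinely specific to $h=1$.
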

	\begin{proof}
		W.l.o.g. assume $k_2\geq k_1$ and put $\dim_{q} U_i=d_i$ for $i=1,2$.
		Suppose for the contrary that in $V$ there exists a one-dimensional $\F_{q^n}$-subspace $X$ such that $\dim_q (U \cap X)\geq k_2+1$.
		Clearly, $X \notin V_1 \cup V_2$.
		Consider the $(r_1+1)$-dimensional $\F_{q^n}$-subspace $T:=\la V_1, X \ra_{\F_{q^n}}$. 
		Then $Y:=T \cap V_2$  is a one-dimensional $\F_{q^n}$-subspace.
		Now consider the $\F_q$-subspace $T':=\la U_1, X \cap U\ra_{\F_q}$. By our assumption on $X$, $\dim_{q} T' \geq d_1+k_2+1$. Then
		\[\dim_q \la T', U_2\ra + \dim_{q} (T' \cap U_2) = \dim_{q} T' + \dim_{q} U_2.\]
		Since $\la T', U_2\ra_{\F_{q}}=U$, this yields 
		$\dim_q (T' \cap U_2) \geq k_2+1$.
		Also, from $T \geq T'$ and $V_2 \geq U_2$, we have $Y=T \cap V_2 \geq T' \cap U_2$ and hence the one-dimensional $\F_{q^n}$-subspace $Y$ meets $U_2$ in an $\F_q$-subspace of dimension at least $k_2+1$, a contradiction.
	\end{proof}
	
\section{Dualities on evasive subspaces}
\label{DD}
	
	\subsection{``Ordinary'' dual of evasive subspaces}
	
	Let $\sigma: V\times V\longrightarrow \F_{q^n}$ be a non-degenerate
	reflexive sesquilinear form on $V=V(r,q^n)$ and define
	\begin{equation}\label{form:traccia}
		\sigma' \colon ({\bf u}, {\bf v})\in V\times V \rightarrow
		\Tr_{q^n/q}(\sigma({\bf u}, {\bf v}))\in \F_q,
	\end{equation}
	where $\Tr_{q^n/q}$ denotes the trace function of $\F_{q^n}$ over $\F_q$.
	Then $\sigma'$ is a non-degenerate reflexive sesquilinear form on
	$V$, when $V$ is  regarded as an $rn$-dimensional vector space  over
	$\F_{q}$. Let $\tau$ and $\tau'$ be the orthogonal complement maps
	defined by $\sigma$ and $\sigma'$ on the lattices  of the
	$\F_{q^n}$-subspaces and $\F_{q}$-subspaces of $V$, respectively.
	Recall that  if $R$ is an
	$\F_{q^n}$-subspace of $V$ and $U$ is an $\F_{q}$-subspace of $V$
	then $U^{\tau'}$ is an $\F_q$-subspace of $V$, $\dim_{q^n}R^\tau+\dim_{q^n}R= r$ and
	$\dim_q U^{\tau'}+\dim_q U= rn$. It easy to see
	that $R^\tau=R^{\tau'}$ for each $\F_{q^n}$-subspace $R$ of $V$
	(for more details see \cite[Chapter 7]{Taylor}).
	
	Also, $U^{\tau '}$ is called the \emph{dual} of $U$ (w.r.t. $\tau'$). 
	Up to $\Gamma\mathrm{L}(r,q^n)$-equivalence, the dual of an $\F_q$-subspace of $V$ does not depend on the choice of the non-degenerate reflexive sesquilinear forms $\sigma$ and $\sigma'$ on $V$. For more details see \cite{Polverino}. If $R$ is an $s$-dimensional $\F_{q^n}$-subspace of $V$ and $U$ is a $t$-dimensional $\F_q$-subspace of $V$, then
	\begin{equation}\label{pesi}
		\dim_q(U^{\tau'}\cap R^\tau)-\dim_q(U\cap R)=rn-t-sn.
	\end{equation}
	From the previous equation the next result immediately follows.
	
	\begin{proposition}
		\label{d2nuovo}
		Let $U$ be a $t$-dimensional $(h,k)_q$-evasive subspace in $V =V(r,q^n)$, with $k<hn$. 
		Then $U^{\tau'}$ is an $(rn-t)$-dimensional $(r-h,(r-h)n+k-t)_q$-evasive subspace in $V$.
	\end{proposition}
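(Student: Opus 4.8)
The plan is to verify the two defining properties of an $(r-h,(r-h)n+k-t)_q$-evasive subspace directly for $U^{\tau'}$: the lower bound on its $\F_{q^n}$-span and the upper bound on its intersections with $(r-h)$-dimensional $\F_{q^n}$-subspaces. The claimed dimension $\dim_q U^{\tau'}=rn-t$ is immediate from $\dim_q U^{\tau'}+\dim_q U=rn$, so only these two properties require work, and the hypothesis $k<hn$ will only be needed for the span bound.

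First I would dispose of the intersection bound, which is a one-line consequence of \eqref{pesi}. Let $W$ be an arbitrary $(r-h)$-dimensional $\F_{q^n}$-subspace of $V$ and write $W=R^\tau$, where $R:=W^\tau$ satisfies $\dim_{q^n}R=h$. Applying \eqref{pesi} with this $R$ (so $s=h$) gives
\[\dim_q(U^{\tau'}\cap W)=\dim_q(U\cap R)+rn-t-hn.\]
Since $R$ is $h$-dimensional and $U$ is $(h,k)_q$-evasive, $\dim_q(U\cap R)\le k$, and hence $\dim_q(U^{\tau'}\cap W)\le (r-h)n+k-t$, exactly the required bound.

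Next comes the span condition $\dim_{q^n}\la U^{\tau'}\ra_{\F_{q^n}}\ge r-h$, which is where $k<hn$ enters. The key observation I would establish is the duality identity $\big(\la U^{\tau'}\ra_{\F_{q^n}}\big)^{\tau}=M$, where $M$ denotes the largest $\F_{q^n}$-subspace contained in $U$; this holds because $\tau=\tau'$ on $\F_{q^n}$-subspaces reverses inclusions and $(U^{\tau'})^{\tau'}=U$, so the smallest $\F_{q^n}$-subspace containing $U^{\tau'}$ dualizes to the largest $\F_{q^n}$-subspace inside $U$. Thus $\dim_{q^n}\la U^{\tau'}\ra_{\F_{q^n}}=r-\dim_{q^n}M$, and it remains to bound $\dim_{q^n}M$. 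If $M$ contained an $h$-dimensional $\F_{q^n}$-subspace $H$, then $H\subseteq U$ would force $\dim_q(U\cap H)=hn>k$, contradicting that $U$ is $(h,k)_q$-evasive; hence $\dim_{q^n}M\le h-1$ and $\dim_{q^n}\la U^{\tau'}\ra_{\F_{q^n}}\ge r-h+1\ge r-h$.

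The routine part is the intersection bound, which is a mere substitution into \eqref{pesi}. I expect the main obstacle to be the span condition: one must first set up the correspondence between $\la U^{\tau'}\ra_{\F_{q^n}}$ and the largest $\F_{q^n}$-subspace contained in $U$, and then recognize that the hypothesis $k<hn$ is precisely what forbids $U$ from containing an $h$-dimensional $\F_{q^n}$-subspace. Without the assumption $k<hn$, the subspace $U$ could contain an $\F_{q^n}$-subspace of dimension $h$, the $\F_{q^n}$-span of $U^{\tau'}$ could then drop below $r-h$, and the conclusion would fail; this explains why the hypothesis appears in the statement.
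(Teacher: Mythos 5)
Your proposal is correct, and it splits into one part that coincides with the paper and one part that takes a genuinely different route. The intersection bound is handled exactly as in the paper: both you and the authors apply \eqref{pesi} with an $h$-dimensional $R$ (so that $R^\tau=W$ is $(r-h)$-dimensional) and use $\dim_q(U\cap R)\le k$. Where you diverge is the span condition. The paper's proof is a short self-referential contradiction: if $\dim_{q^n}\la U^{\tau'}\ra_{\F_{q^n}}<r-h$, then $U^{\tau'}$ lies inside some $(r-h)$-dimensional $\F_{q^n}$-subspace, which therefore meets $U^{\tau'}$ in an $\F_q$-subspace of dimension $rn-t$; since $k<hn$ gives $rn-t>(r-h)n+k-t$, this contradicts the intersection bound just established. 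You instead prove the duality identity $\bigl(\la U^{\tau'}\ra_{\F_{q^n}}\bigr)^{\tau}=M$, where $M$ is the largest $\F_{q^n}$-subspace contained in $U$ --- a correct Galois-connection argument, resting on $\tau=\tau'$ for $\F_{q^n}$-subspaces, inclusion reversal, and $(U^{\tau'})^{\tau'}=U$ --- and then observe that $k<hn$ forbids $U$ from containing any $h$-dimensional $\F_{q^n}$-subspace, so $\dim_{q^n}M\le h-1$. Your route is slightly longer but more structural: it makes transparent exactly what the hypothesis $k<hn$ rules out, and it delivers the marginally stronger conclusion $\dim_{q^n}\la U^{\tau'}\ra_{\F_{q^n}}\ge r-h+1$ (the paper's contradiction argument would also yield this, but the authors do not state it). The paper's route is more economical, since it recycles the bound already proven instead of introducing the auxiliary subspace $M$; both arguments ultimately hinge on the same two ingredients, namely \eqref{pesi} and the hypothesis $k<hn$.
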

	\begin{proof}
	From \eqref{pesi} it follows that the $(r-h)$-dimensional $\F_{q^n}$-subspaces meet $U^{\tau'}$ in subspaces of dimension at most $(r-h)n+k-t$. 
	The only thing to prove is the fact that $\la U^{\tau'} \ra_{\F_{q^n}}$ has dimension at least $r-h$. If this was not true then one could find an
	$(r-h-1)$-dimensional $\F_{q^n}$-subspace containing $U^{\tau'}$ and hence also an $(r-h)$-dimensional $\F_{q^n}$-subspace containing $U^{\tau'}$. 
	Such a subspace would meet $U^{\tau'}$ in an $\F_q$-subspace of dimension $rn-t$ which is larger than $(r-h)n+k-t$, a contradiction.
	\end{proof}
	
	Note that in Proposition \ref{d2nuovo} the condition $k<hn$ is not very restrictive since with $k\geq hn$ each $\F_q$-subspace of $V$ is $(h,k)_q$-evasive.
	
	\begin{corollary}
		\label{corord}
		Let $U$ be a $t$-dimensional $(h,q^h)$-evasive subspace in $V =V(r,q^n)$ with $t>h$. 
		If $\dim_{q^n} \la U^{\tau'}\ra_{\F_{q^n}} \geq r-h$, then $U^{\tau'}$ is maximum $(r-h,(r-h)n+h-t)_q$-evasive with dimension $rn-t$.
	\end{corollary}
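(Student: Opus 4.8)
The plan is to read the conclusion as two separate assertions and to supply each from Proposition~\ref{d2nuovo}: the fact that $U^{\tau'}$ is $(r-h,(r-h)n+h-t)_q$-evasive of dimension $rn-t$ will be immediate, whereas the word \emph{maximum} is the only genuinely new content and will be extracted by applying the same duality a \emph{second} time, to a hypothetical larger competitor. Note first that the assumption $t>h$ already forces $\dim_{q^n}\la U\ra_{\F_{q^n}}\geq h$: were the $\F_{q^n}$-span smaller it would sit inside some $h$-dimensional $\F_{q^n}$-subspace meeting $U$ in all of $U$, i.e.\ in $\F_q$-dimension $t>h$, contradicting evasivity. Hence $U$ is a bona fide $(h,h)_q$-evasive subspace in the sense of Definition~\ref{maindef}.

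First I would invoke Proposition~\ref{d2nuovo} with $k=h$; since $h<hn$, its hypothesis is satisfied and it returns that $U^{\tau'}$ is $(r-h,(r-h)n+h-t)_q$-evasive of dimension $rn-t$, the required span condition $\dim_{q^n}\la U^{\tau'}\ra_{\F_{q^n}}\geq r-h$ being exactly the standing assumption of the corollary. It then remains only to show that no $(r-h,(r-h)n+h-t)_q$-evasive subspace can exceed dimension $rn-t$. I would argue by contradiction: suppose $W$ is such a subspace with $\dim_q W\geq rn-t+1$. The crucial point is that the hypothesis $t>h$ is precisely what makes Proposition~\ref{d2nuovo} applicable to $W$, because its second parameter obeys $(r-h)n+h-t<(r-h)n$, i.e.\ $k<h'n$ with $h'=r-h$. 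Dualising $W$ therefore yields an $\F_q$-subspace $W^{\tau'}$ which is $(h,\,rn+h-t-\dim_q W)_q$-evasive with $\dim_{q^n}\la W^{\tau'}\ra_{\F_{q^n}}\geq h$. Substituting $\dim_q W\geq rn-t+1$ gives
\[
rn+h-t-\dim_q W\leq h-1 .
\]
Thus $W^{\tau'}$ would be a subspace whose $\F_{q^n}$-span has dimension at least $h$ yet which is $(h,k')_q$-evasive with $k'\leq h-1<h$, contradicting the elementary inequality $h\leq k'$ recorded after Definition~\ref{maindef}. Hence $\dim_q W\leq rn-t$, so $U^{\tau'}$ is maximum.

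The individual computations are routine, all of them being bookkeeping with the orthogonality relation \eqref{pesi} carried out inside Proposition~\ref{d2nuovo}; I do not expect any serious calculational obstacle. The one place requiring care — and the conceptual heart of the argument — is recognising that the single hypothesis $t>h$ does double duty: it guarantees that $U$ itself spans at least $h$ dimensions, and simultaneously that the dual of \emph{any} competitor $W$ again falls under the scope of Proposition~\ref{d2nuovo}. Once this is noticed, the maximality statement is reduced to the already-established fact that an evasive subspace with sufficiently large $\F_{q^n}$-span cannot have its second parameter drop below its first, which is the contradiction exploited above.
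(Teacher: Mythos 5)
Your proposal is correct and takes essentially the same route as the paper: the paper likewise proves maximality by dualising a hypothetical $(rn-t+1)$-dimensional competitor $W$ via \eqref{pesi}, noting that $W^{\tau'}$ would meet every $h$-dimensional $\F_{q^n}$-subspace in $\F_q$-dimension at most $h-1$ and hence would itself have dimension at most $h-1$, forcing the contradiction $t\leq h$. Your version merely packages that same computation through Proposition \ref{d2nuovo} and the observation $h\leq k$ recorded after Definition \ref{maindef}, and correctly identifies that $t>h$ is exactly what makes the duality applicable to the competitor.
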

	\begin{proof}
		Suppose for the contrary that there exists an $(rn-t+1)$-dimensional $(r-h,(r-h)n+h-t)_q$-evasive subspace $W$ in $V$.
		Then $\dim_q W^{\tau'}= t-1$ and $W^{\tau'}$ meets the $h$-dimensional $\F_{q^n}$-subspaces of $V$ in 
		$\F_q$-subspaces of dimension at most $h-1$. Thus $\dim_q W^{\tau'} \leq h-1$. It means that $t\leq h$, a contradiction.
	\end{proof}

	\subsection{Delsarte dual of evasive subspaces}
	
	In this section we follow \cite[Section 3]{CsMPZ2019}. 
	Let $U$ be a $t$-dimensional $\F_q$-subspace of a vector space $V=V(r,q^n)$, with $t>r$. By \cite[Theorems 1, 2]{LuPo2004} (see also \cite[Theorem 1]{LuPoPo2002}), there is an embedding of $V$ in $\V=V(t,q^n)$ with $\V=V \oplus \Gamma$ for some $(t-r)$-dimensional $\F_{q^n}$-subspace $\Gamma$ such that
	$U=\la W,\Gamma\ra_{\F_{q}}\cap V$, where $W$ is a $t$-dimensional $\F_q$-subspace of $\V$, $\langle W\rangle_{\F_{q^n}}=\V$ and $W\cap \Gamma=\{{\bf 0}\}$.
	Then the quotient space $\V/\Gamma$ is isomorphic to $V$ and under this isomorphism $U$ is the image of the $\F_q$-subspace $W+\Gamma$ of $\V /\Gamma$.

	\medskip
	
	Now, let $\beta'\colon W\times W\rightarrow\F_{q}$ be a non-degenerate reflexive sesquilinear form on $W$. Then $\beta'$ can be extended to a non-degenerate reflexive sesquilinear form $\beta\colon \V\times\V\rightarrow\F_{q^n}$.
	Let $\perp$ and $\perp'$ be the orthogonal complement maps defined by $\beta$ and $\beta'$ on the lattice of $\F_{q^n}$-subspaces of $\V$ and of $\F_q$-subspaces of $W$, respectively.
	For an $\F_q$-subspace $S$ of $W$ the $\F_{q^n}$-subspace $\la S \ra_{\F_{q^n}}$ of $\V$ will be denoted by $S^*$. In this case $(S^*)^{\perp}=(S^{\perp'})^*$.
	
	\medskip
	
	The next result and definition come from \cite[Proposition 3.1 and Definition 3.2]{CsMPZ2019}.
	
	\begin{proposition}
		\label{prop:dual2}
		Let $W$, $V$, $\Gamma$, $\V$, $\perp$ and $\perp'$ be defined as above. If $U$ is a $t$-dimensional $(h,t-r+h-1)_q$-evasive subspace of $V$ 
		with $t>r$, then $W+\Gamma^\perp$ is a subspace of $\V/ \Gamma^\perp$ of dimension at least $(t-r+h+1)$ (and at most $t$) over $\F_q$. If $h$ is maximal with this property then $W+\Gamma^\perp$ has dimension exactly $t-r+h+1$. 
	\end{proposition}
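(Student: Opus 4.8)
The plan is to reduce the whole statement to the computation of $\dim_q(W\cap\Gamma^{\perp})$. Since $W+\Gamma^{\perp}$ denotes the image of $W$ under the quotient map $\rho\colon\V\to\V/\Gamma^{\perp}$, and $\ker(\rho|_W)=W\cap\Gamma^{\perp}$, I would first record that
\[\dim_q(W+\Gamma^{\perp})=\dim_q\rho(W)=t-\dim_q(W\cap\Gamma^{\perp}),\]
so that the bound $\dim_q(W+\Gamma^{\perp})\le t$ is automatic. Calling $U$ \emph{$h$-good} when it is $(h,t-r+h-1)_q$-evasive, the proposition becomes equivalent to the biconditional
\[U \text{ is } h\text{-good}\iff\dim_q(W\cap\Gamma^{\perp})\le r-1-h,\]
the ``$\Rightarrow$'' half giving the asserted lower bound and the ``$\Leftarrow$'' half feeding the maximality clause.

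To prove this I would install a dictionary between $V\cong\V/\Gamma$ and $\Gamma^{\perp}$ using the form. Let $\pi\colon\V\to\V/\Gamma\cong V$, so $\pi|_W$ is injective with image $U$. As $\beta'=\beta|_W$ is non-degenerate one has the orthogonal splitting $\V=W\oplus W^{\perp_q}$, where $\perp_q$ is the orthogonal-complement map of the $\F_q$-bilinear form $\Tr_{q^n/q}\circ\beta$ (which agrees with $\perp$ on $\F_{q^n}$-subspaces); write $\mathrm{pr}\colon\V\to W$ for the projection along $W^{\perp_q}$. The form $\Tr_{q^n/q}\circ\beta$ induces a perfect pairing $\Gamma^{\perp}\times(\V/\Gamma)\to\F_q$ for which the annihilator of $U$ is $U^{\circ}:=\Gamma^{\perp}\cap W^{\perp_q}$ with $\dim_q U^{\circ}=rn-t$, and $\mathrm{pr}|_{\Gamma^{\perp}}\colon\Gamma^{\perp}\to W$ is surjective with kernel $U^{\circ}$. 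Via $R:=\pi^{-1}(H)\supseteq\Gamma$ and $R\mapsto R^{\perp}\subseteq\Gamma^{\perp}$, the $h$-dimensional $\F_{q^n}$-subspaces $H$ of $V$ correspond bijectively to the $(r-h)$-dimensional $\F_{q^n}$-subspaces of $\Gamma^{\perp}$, and a direct computation with $\mathrm{pr}$ yields
\[\dim_q(U\cap H)=\dim_q(W\cap R)=t-\dim_q\mathrm{pr}(R^{\perp}),\qquad\dim_q(W\cap\Gamma^{\perp})=t-\dim_q\mathrm{pr}(\Gamma).\]
Here $\dim_q\mathrm{pr}(R^{\perp})=(r-h)n-\dim_q(R^{\perp}\cap U^{\circ})$, so $h$-goodness of $U$ is precisely a uniform lower bound on $\dim_q\mathrm{pr}(R^{\perp})$ over all $(r-h)$-dimensional $\F_{q^n}$-subspaces $R^{\perp}$ of $\Gamma^{\perp}$.

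The heart of the argument---and the step I expect to be the main obstacle---is to match this family bound with the single quantity $\dim_q(W\cap\Gamma^{\perp})$ by choosing $H$ (equivalently $R^{\perp}$) optimally. For the contrapositive of ``$\Rightarrow$'' I would start from $\dim_q(W\cap\Gamma^{\perp})\ge r-h$ and produce an $(r-h)$-dimensional $\F_{q^n}$-subspace $R^{\perp}\subseteq\Gamma^{\perp}$ with $\dim_q(R^{\perp}\cap U^{\circ})\ge(r-h)(n-1)$; then $\dim_q\mathrm{pr}(R^{\perp})\le r-h$, so the associated $h$-dimensional $H$ satisfies $\dim_q(U\cap H)\ge t-r+h$ and $U$ fails to be $h$-good. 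The ``$\Leftarrow$'' direction is the reverse construction, turning a subspace $H$ with $\dim_q(U\cap H)\ge t-r+h$ into an $\F_q$-subspace of $W\cap\Gamma^{\perp}$ of dimension at least $r-h$. The delicate point is that $\Gamma$ and $\Gamma^{\perp}$ lie on opposite sides of the pairing, so this passage amounts to a careful count of how large an $\F_{q^n}$-subspace of $\Gamma^{\perp}$ can be fitted into the $\F_q$-subspace $U^{\circ}$; this is where the generating hypothesis $\langle W\rangle_{\F_{q^n}}=\V$ (equivalently $\langle U\rangle_{\F_{q^n}}=V$) and the non-degeneracy of $\beta'$ must be used in full.

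Finally, for the maximality clause, the ``$\Rightarrow$'' half already gives $\dim_q(W+\Gamma^{\perp})\ge t-r+h+1$. If $h$ is maximal with $U$ being $h$-good and this inequality were strict, then $\dim_q(W\cap\Gamma^{\perp})\le r-1-(h+1)$, whence the ``$\Leftarrow$'' half would make $U$ also $(h+1)$-good, i.e. $(h+1,t-r+h)_q$-evasive, contradicting maximality. Therefore equality holds and $\dim_q(W+\Gamma^{\perp})=t-r+h+1$.
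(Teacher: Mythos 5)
Your reduction is sound: the proposition is indeed equivalent to the biconditional ``$U$ is $(h,t-r+h-1)_q$-evasive $\iff\dim_q(W\cap\Gamma^\perp)\le r-h-1$,'' and your derivation of the maximality clause from it is exactly the paper's own logic (the paper's final paragraph is your ``$\Leftarrow$'' half, applied to $h+1$ in contrapositive form). The problem is that your proof stops where the content begins: both implications are announced as constructions you ``would'' perform and are explicitly flagged as ``the main obstacle,'' but they are never carried out, and nothing in your dictionary ($\mathrm{pr}$, $U^\circ$, the perfect pairing) actually produces the required subspace $R^\perp$. The missing idea is short. Any $\F_q$-independent subset of $W$ is automatically $\F_{q^n}$-independent, because an $\F_q$-basis of $W$ is a spanning set of size $t=\dim_{q^n}\V$, hence an $\F_{q^n}$-basis of $\V$. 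Consequently, if $S\subseteq W\cap\Gamma^\perp$ has $\dim_q S=r-h$, then $S^*:=\la S\ra_{\F_{q^n}}$ has $\F_{q^n}$-dimension exactly $r-h$ and lies in $\Gamma^\perp$; the identity $(S^*)^\perp=(S^{\perp'})^*$ then hands you the subspace $H:=(S^*)^\perp$, which contains $\Gamma$, has $\F_{q^n}$-dimension $t-r+h$, and meets $W$ in $S^{\perp'}$, of $\F_q$-dimension $t-r+h>t-r+h-1$, contradicting evasiveness. This is precisely your ``optimal choice'' $R^\perp=S^*$. The ``$\Leftarrow$'' direction is the same computation run backwards: take $S$ a $(t-r+h)$-dimensional $\F_q$-subspace of $H\cap W$, note $S^*=H$ by the same dimension count, so $S^{\perp'}\subseteq W\cap H^\perp\subseteq W\cap\Gamma^\perp$ has dimension $r-h$. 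Without this step your argument establishes only bookkeeping identities, not the proposition.

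There is also a characteristic defect in the dictionary itself. The splitting $\V=W\oplus W^{\perp_q}$, on which $\mathrm{pr}$ and all your displayed formulas depend, requires the $\F_q$-form $\Tr_{q^n/q}\circ\beta$ to be non-degenerate on $W$. But on $W\times W$ this form equals $\Tr_{q^n/q}\circ\beta'=n\cdot\beta'$ (since $\beta'$ is $\F_q$-valued), which is identically zero whenever the characteristic $p$ divides $n$; in that case $W\subseteq W^{\perp_q}$ and no projection along $W^{\perp_q}$ exists. This is not a fringe case: the paper applies the Delsarte dual with $n=5$ and $q$ a power of $5$. The defect can be repaired by pairing instead with $(x,y)\mapsto\Tr_{q^n/q}\bigl(\theta\,\beta(x,y)\bigr)$ for some $\theta$ with $\Tr_{q^n/q}(\theta)\ne0$, but as written your framework is invalid in exactly those characteristics. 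The paper avoids the issue entirely by never splitting $\V$: it works only with $\perp$, $\perp'$ and the relation $(S^*)^\perp=(S^{\perp'})^*$.
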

	\begin{proof}
As described above, $U$ turns out to be isomorphic to the $\F_q$-subspace $W+\Gamma$ of the quotient space $\V/\Gamma$.
Also, an $h$-dimensional $\F_{q^n}$-subspace of $\mathbb{V}/\Gamma$ corresponds to a $(t-r+h)$-dimensional $\F_{q^n}$-subspace of $\V$ containing $\Gamma$. 
Hence,  $\dim_q(H\cap W) \leq t-r+h-1$ for each $(t-r+h)$-dimensional subspace $H$ of $\V$ containing $\Gamma$. 
Next we prove that the dimension of $W\cap \Gamma^\perp$ is at most $r-h-1$. 
Indeed, by way of contradiction, suppose that there exists an $\F_q$-subspace $S$ of dimension $r-h$ in $W\cap\Gamma^\perp$. 
Then the $(t-r+h)$-dimensional $\F_{q^n}$-subspace $(S^*)^\perp$ of $\V$ contains the subspace $\Gamma$ and meets $W$ in the $(t-r+h)$-dimensional $\F_q$-subspace $S^{\perp'}$, a contradiction.
It follows that the dimension of $W+\Gamma^\perp$ is at least $t-(r-h-1)$.

Finally, if $h$ is maximal with the property of the statement, then there exists an $(h+1)$-dimensional $\F_{q^n}$-subspace $M$ of $V$ meeting $U$ in an $\F_q$-subspace of dimension at least $t-r+h+1$. 
Then $M$ corresponds to a $(t-r+h+1)$-dimensional subspace $H$ of $\V$ containing $\Gamma$ such that $\dim_q(H\cap W)\geq t-r+h+1$. 
Then $H^\perp$ is an $(r-h-1)$-dimensional subspace contained in $\Gamma^\perp$ and intersecting $W$ in the $(r-h-1)$-dimensional $\F_q$-subspace $(H\cap W)^{\perp'}$.
It follows that the dimension of $W\cap \Gamma^\perp$ is at least $r-h-1$ and hence the dimension of $W+\Gamma^\perp$ is at most $t-(r-h-1)$.
Combining with the lower bound on the dimension of $W+\Gamma^\perp$ the result follows.
	\end{proof}

	\begin{definition}
		\label{deffff}	
		\rm
		Let $U$ be a $t$-dimensional $\F_q$-subspace of $V=V(r,q^n)$. Then the $\F_q$-subspace $W+\Gamma^{\perp}$ of the quotient space $\V/\Gamma^{\perp}=V(t-r,q^n)$ will be denoted by $\bar U$ and it is the \emph{Delsarte dual} of $U$ (w.r.t.\,$\perp$).
	\end{definition}
	
	Arguing as in \cite[Remark 3.7]{CsMPZ2019} one can show that up to $\Gamma\mathrm{L}(t-r,q^n)$-equivalence, the Delsarte dual of an $\F_q$-subspace of $V$ does not depend on the choice of the non-degenerate reflexive sesquilinear forms $\beta'$ and $\beta$ on $W$ and $\mathbb{V}$, respectively.
	
	One can adapt the proof of \cite[Theorem 3.3]{CsMPZ2019} and give the following generalization.

	\begin{theorem}
		\label{thm:dual2}
		Let $U$ be a $t$-dimensional $(h,k)_q$-evasive subspace of a vector space $V=V(r,q^n)$ with $r<t$, $k< t-r+h-1$.
		Then the $(t-r+h-k-1)$-dimensional $\F_{q^n}$-subspaces meet $\bar U$ in $\F_q$-subspaces of dimension at most $t-k-2$. 
		In particular, if $\la \bar U\ra_{\F_{q^n}}$ has dimension at least $t-r+h-k-1$ then $\bar U$ is 
		a $(t-r+h-k-1,t-k-2)_q$-evasive subspace of dimension at least $t-r+h+1$ in  $\V/\Gamma^\perp=V(t-r,q^n)$.
	\end{theorem}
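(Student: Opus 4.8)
The plan is to transfer everything to the ambient space $\V$ and the subspace $W$, where both $U$ and $\bar U$ are encoded by a single object. Recall that $U\cong\pi_\Gamma(W)$ with $W\cap\Gamma=\{{\bf 0}\}$, where $\pi_\Gamma\colon\V\to\V/\Gamma$ is the projection; consequently, for an $\F_{q^n}$-subspace $L$ of $\V$ with $\Gamma\subseteq L$ and $\dim_{q^n}L=t-r+h$ (these are exactly the preimages of the $h$-dimensional $\F_{q^n}$-subspaces $M=\pi_\Gamma(L)$ of $\V/\Gamma\cong V$) the identity $W\cap\Gamma=\{{\bf 0}\}$ gives $\dim_q(M\cap U)=\dim_q(L\cap W)$. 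Hence the $(h,k)_q$-evasiveness of $U$ is equivalent to
\[\dim_q(L\cap W)\le k\quad\text{for every }\F_{q^n}\text{-subspace }L\supseteq\Gamma\text{ with }\dim_{q^n}L=t-r+h.\]
Symmetrically, writing $w_0:=\dim_q(W\cap\Gamma^\perp)$ and lifting an $h'$-dimensional $\F_{q^n}$-subspace $\bar H$ of $\V/\Gamma^\perp$ to the $(r+h')$-dimensional $H\supseteq\Gamma^\perp$, a short computation with $\pi_{\Gamma^\perp}$ yields $\dim_q(\bar H\cap\bar U)=\dim_q(H\cap W)-w_0$. Since $k<t-r+h-1$, the subspace $U$ is in particular $(h,t-r+h-1)_q$-evasive, so Proposition \ref{prop:dual2} (and its proof) applies and yields both $\dim_q\bar U\ge t-r+h+1$ and the bound $w_0\le r-h-1$; this already settles the dimension assertion.

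It remains to prove the intersection bound, which I would do by contradiction in the $(\V,W,\Gamma,\perp)$ picture. Suppose some $\F_{q^n}$-subspace $\bar H$ of $\V/\Gamma^\perp$ with $\dim_{q^n}\bar H=t-r+h-k-1$ meets $\bar U$ in $\F_q$-dimension at least $t-k-1$. By the dictionary above this produces an $\F_{q^n}$-subspace $H\supseteq\Gamma^\perp$ with $\dim_{q^n}H=t+h-k-1$ and $\dim_q(H\cap W)\ge t-k-1+w_0$. The goal is to manufacture from $H$ an $\F_{q^n}$-subspace $L$ with $\Gamma\subseteq L$, $\dim_{q^n}L=t-r+h$ and $\dim_q(L\cap W)\ge k+1$, which contradicts the reformulated $(h,k)_q$-evasiveness of $U$. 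The tool is the relation $(S^*)^{\perp}=(S^{\perp'})^{*}$ between the orthogonal maps: for an $\F_q$-subspace $S$ of $W$ one passes from $S$ to the $\F_{q^n}$-subspace $(S^*)^\perp=\la S^{\perp'}\ra_{\F_{q^n}}$, which automatically contains $S^{\perp'}$ and hence meets $W$ in $\F_q$-dimension at least $\dim_q S^{\perp'}=t-\dim_q S$; moreover $(S^*)^\perp\supseteq\Gamma$ whenever $S\subseteq\Gamma^\perp$. Applying this to a subspace $S$ built from $H\cap W$, and then trimming down to the exact dimension $t-r+h$, is what should deliver $L$.

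The main obstacle is precisely this last bookkeeping: the map $S\mapsto S^*$ does not preserve dimension, so $(S^*)^\perp$ is in general an $\F_{q^n}$-subspace through $\Gamma$ of dimension strictly larger than $t-r+h$, and one must cut it down to dimension exactly $t-r+h$ \emph{without} destroying the lower bound $k+1$ on the intersection with $W$. Controlling $\dim_{q^n}S^*$ (equivalently, the $\F_{q^n}$-span of the relevant portion of $H\cap W$) and verifying that the arithmetic closes up to the precise parameters $t-r+h-k-1$ and $t-k-2$ is exactly where the hypothesis $k<t-r+h-1$ and the bound $w_0\le r-h-1$ are consumed; this is the delicate point and the natural adaptation of the argument in \cite[Theorem 3.3]{CsMPZ2019}. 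Once the contradiction is reached the intersection bound holds, and combining it with $\dim_q\bar U\ge t-r+h+1$ and the standing hypothesis $\dim_{q^n}\la\bar U\ra_{\F_{q^n}}\ge t-r+h-k-1$ shows that $\bar U$ is genuinely $(t-r+h-k-1,t-k-2)_q$-evasive, completing the proof.
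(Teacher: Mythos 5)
Your setup is sound and matches the paper's: the dictionary between $h$-dimensional subspaces of $V\cong\V/\Gamma$ and $(t-r+h)$-dimensional $\F_{q^n}$-subspaces $L\supseteq\Gamma$ of $\V$ (and its analogue over $\Gamma^\perp$), the contradiction hypothesis lifted to an $\F_{q^n}$-subspace $H\supseteq\Gamma^\perp$ with $\dim_{q^n}H=t+h-k-1$ and $\dim_q(H\cap W)\geq t-k-1$, and the dimension statement via Proposition \ref{prop:dual2} are all correct. But the proof stops exactly at the crux --- producing the subspace $L$ --- which you defer to ``the natural adaptation'' of \cite[Theorem 3.3]{CsMPZ2019}, and your diagnosis of why this step is delicate rests on a false premise. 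For $\F_q$-subspaces $S$ of $W$, the map $S\mapsto S^*=\la S\ra_{\F_{q^n}}$ \emph{does} preserve dimension: since $\dim_q W=t=\dim_{q^n}\V$ and $\la W\ra_{\F_{q^n}}=\V$, any $\F_q$-basis of $W$ is an $\F_{q^n}$-basis of $\V$, hence every $\F_q$-independent subset of $W$ is $\F_{q^n}$-independent. So choosing $S\leq H\cap W$ with $\dim_q S=t-k-1$ gives $\dim_{q^n}S^*=t-k-1$ and $\dim_{q^n}(S^*)^\perp=k+1$, which by the hypothesis $k<t-r+h-1$ is \emph{strictly smaller} than $t-r+h$ --- the opposite of your worry that $(S^*)^\perp$ is ``in general of dimension strictly larger than $t-r+h$'' and must be cut down.

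The genuine difficulty is the one you half-noticed but resolved incorrectly: $(S^*)^\perp$ need not contain $\Gamma$, and your sufficient condition $S\subseteq\Gamma^\perp$ is unusable, because $\dim_q(W\cap\Gamma^\perp)\leq r-h-1<t-k-1$, so no such $S$ of the required dimension exists inside $\Gamma^\perp$. The paper closes the gap by \emph{enlarging} rather than cutting: from $S^*\subseteq H$ and $\Gamma^\perp\subseteq H$ one gets $H^\perp\subseteq\Gamma\cap(S^*)^\perp$ with $\dim_{q^n}H^\perp=k-h+1$, so the Grassmann formula yields
\[
\dim_{q^n}\la\Gamma,(S^*)^\perp\ra_{\F_{q^n}}\leq (t-r)+(k+1)-(k-h+1)=t-r+h .
\]
Any $(t-r+h)$-dimensional $\F_{q^n}$-subspace $T$ containing this span contains $\Gamma$ and contains $S^{\perp'}=W\cap(S^*)^\perp$, whose $\F_q$-dimension is $t-\dim_q S=k+1$; by your own dictionary, $T$ then corresponds to an $h$-dimensional $\F_{q^n}$-subspace of $V$ meeting $U$ in $\F_q$-dimension at least $k+1$, contradicting the $(h,k)_q$-evasiveness of $U$. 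Without the two ingredients you are missing --- dimension preservation of $S\mapsto S^*$ on subspaces of $W$, and the containment $H^\perp\subseteq\Gamma\cap(S^*)^\perp$ that controls the span with $\Gamma$ --- the arithmetic cannot close, so as written your proposal has a real gap at the heart of the argument.
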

	\begin{proof}
		By Proposition \ref{prop:dual2}, $\bar U=W+\Gamma^{\perp}$ has dimension at least $t-r+h+1$ in $\V/\Gamma^\perp$. 
		By way of contradiction, suppose that there exists a $(t-r+h-k-1)$-dimensional $\F_{q^n}$-subspace of $\V/\Gamma^\perp$, say $M$, such that
		\begin{equation}\label{form2}
			\dim_q(M\cap \bar U)\geq t-k-1.
		\end{equation}
		Then $M=H+\Gamma^{\perp}$, for some $(t+h-k-1)$-dimensional $\F_{q^n}$-subspace $H$ of $\V$ containing $\Gamma^\perp$. For $H$, by \eqref{form2}, it follows that
			$\dim_q(H\cap W)=\dim_q(M \cap \bar U)\geq t-k-1$.
			
		Let $S$ be an $(t-k-1)$-dimensional $\F_q$-subspace of $W$ contained in $H$ and let $S^*:=\langle S\rangle_{\F_{q^n}}$. Then, $\dim_{q^n}S^*=t-k-1$,
		\begin{equation}\label{form3}
			S^{\perp'}=W\cap (S^*)^\perp \mbox{\quad and \quad}   S^{\perp'}\subset (S^*)^\perp=\langle S^{\perp'}\rangle_{\F_{q^n}}.
		\end{equation}
		Since $S\subseteq H\cap W$ and $\Gamma^\perp\subset H$, we get $S^*\subset H$ and $H^\perp\subset \Gamma$, i.e.
		\begin{equation}\label{form4}
			H^\perp\subseteq \Gamma\cap (S^*)^\perp.
		\end{equation}
		From \eqref{form4} it follows that
			$\dim_{q^n}\left(\Gamma\cap (S^*)^\perp\right)\geq \dim_{q^n}H^\perp=k-h+1$. 
		This implies that
		\[
		\dim_{q^n}\la\Gamma,(S^*)^\perp\ra_{q^n}= \dim_{q^n} \Gamma + \dim_{q^n} (S^*)^{\perp} - \dim_{q^n}\left(\Gamma\cap (S^*)^\perp\right) \leq
		t-r+h \]
		and hence $\la \Gamma, (S^*)^\perp \ra_{\F_{q^n}}$ is contained in a $(t-r+h)$-dimensional $T$ space of $\V$ containing $\Gamma$. Also, $\dim_q(S^{\perp'})=\dim_q W-\dim_q S=k+1$ and, by \eqref{form3}, we get
		\[S^{\perp'}= W\cap (S^*)^\perp\subseteq W\cap T.\]
		Then $\hat T:= T \cap V$ is an $h$-dimensional subspace of $V$ and, by recalling $U=\la W,\Gamma \ra_{\F_{q}}\cap V$,
		\[\dim_q(\hat T \cap U)=\dim_q(T \cap W)\geq \dim_q(S^{\perp'})=k+1,\]
		contradicting the fact that $U$ is $(h,k)_q$-evasive. 
	\end{proof}
	
	\begin{remark}
	\label{ddremark}
	Note that $\la \bar U\ra_{\F_{q^n}}$ has dimension at least $t-r+h-k-1$ whenever $\dim_q {\bar U} > t-k-2$, which clearly holds when $k+h+3>r$.
	\end{remark}
	
	\section{On the size of maximum evasive subspaces}
	\label{maximumsize}
	
	In this section we determine upper bounds on the dimension of maximum evasive subspaces and in some cases we show the sharpness of our results.
	
	We will need the following Singleton-like bound of Delsarte which can be proved easily by the pigeonhole principle.
	
	\begin{result}
		\label{Delsarte}
		Let $\cC$ be an additive subset of $\mathrm{Hom}_{\F_q}(U,V)$ where $\dim_q U = m$ and $\dim_q V =n$  such that non-zero maps of $\cC$ have 
		rank at least $\delta$. Then $|\cC|\leq q^{\min\{m,n\}(\max\{m,n\}-\delta+1)}$.
	\end{result}
	
	\begin{theorem}
		\label{bound}
		Let $U$ be an $(r-1,k)_q$-evasive subspace of $V=V(r,q^n)$.
		\begin{enumerate}
			\item If $k<(r-1)n$ then $\dim_q U \leq n+k-1$.
			\item If $k<r-2+n/(r-1)$ then $\dim_q U \leq n+k-r+1$.
		\end{enumerate}
	\end{theorem}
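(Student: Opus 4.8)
For the first statement I would argue by a double count of the incidences between the vectors of $U$ and the hyperplanes of $V$. Writing $N=(q^{rn}-1)/(q^n-1)$ for the number of $(r-1)$-dimensional $\F_{q^n}$-subspaces and using that each nonzero vector lies in exactly $(q^{(r-1)n}-1)/(q^n-1)$ of them, one gets
\[ \sum_{H}|U\cap H| = N+(q^t-1)\frac{q^{(r-1)n}-1}{q^n-1}, \]
the sum being over all hyperplanes $H$. Since $U$ is $(r-1,k)_q$-evasive, each term is at most $q^k$, so $\sum_H|U\cap H|\le Nq^k$; clearing denominators this reads
\[ (q^t-1)(q^{(r-1)n}-1)\le (q^{rn}-1)(q^k-1). \]
The right-hand side does not depend on $t$ while the left-hand side is increasing in $t$, so it suffices to show the inequality fails at $t=n+k$. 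There the difference of the two sides equals $(q^n-1)(q^k-q^{(r-1)n})$, which is negative precisely because $k<(r-1)n$; hence $t\le n+k-1$.

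For the second statement the first point is not enough. Since $k\ge r-1$ (as observed after Definition \ref{maindef}), any $U$ violating the asserted bound would satisfy $t\ge n+k-r+2\ge n+1$, so in particular $t>n$ and $t>r$. In exactly this range the Singleton-type bound of Result \ref{Delsarte}, applied to the rank-metric code attached to $U$, turns out to be vacuous, and this is what forces a passage to the dual object. The plan is therefore to use the Delsarte duality of Section \ref{DD}. Applying Theorem \ref{thm:dual2} with $h=r-1$ (its hypotheses $r<t$ and $k<t-r+h-1=t-2$ holding in the above range), the Delsarte dual $\bar U$ is a $t$-dimensional $(t-k-2,t-k-2)_q$-evasive subspace of $V(t-r,q^n)$; equivalently, $\bar U$ is $(t-k-2)$-scattered inside its $\F_{q^n}$-span $\langle\bar U\rangle_{\F_{q^n}}$, whose dimension is at most $t-r$ (and at least $t-k-2$ by Remark \ref{ddremark}).

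Now I would invoke the generalized Blokhuis--Lavrauw bound of \cite{CsMPZ2019}, by which an $h$-scattered subspace of $V(\rho,q^n)$ has $\F_q$-dimension at most $\rho n/(h+1)$. With $h=t-k-2$ and $\rho\le t-r$ this gives $t\le (t-r)n/(t-k-1)$, that is
\[ f(t):=t^2-(n+k+1)t+rn\le 0. \]
To conclude I would localize the larger root $t_+$ of $f$. A direct substitution yields $f(n+k-r+2)=n-(r-1)(k-r+2)$, which is strictly positive exactly when $k<r-2+n/(r-1)$, i.e. under the hypothesis; moreover $n+k-r+2$ lies to the right of the vertex $(n+k+1)/2$ because $n+k>2r-3$. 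Hence $n+k-r+2>t_+$, and $f(t)\le 0$ forces $t\le t_+<n+k-r+2$, so $t\le n+k-r+1$.

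The delicate part, and where I expect the real work to lie, is the bookkeeping in the Delsarte-dual step: verifying that the hypotheses of Theorem \ref{thm:dual2} hold throughout the relevant range of parameters, confirming via Remark \ref{ddremark} that $\bar U$ is genuinely scattered in its span (not merely evasive) so that the Blokhuis--Lavrauw bound applies, and handling the degenerate case in which $\langle\bar U\rangle_{\F_{q^n}}$ has dimension as small as $t-k-2$. Once these structural facts are in place, the final root-localization is elementary.
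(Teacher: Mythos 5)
Your part (1) is correct, but note that it is not the route of the paper's primary proof: it reproduces, almost verbatim, the paper's own alternative argument (Theorem \ref{double} and the corollary following it), via double counting of incidences. The paper's main proof of Theorem \ref{bound} instead attaches to $U$ the rank-metric code $\cC_U=\{G_{\mathbf{a}} : \mathbf{a}\in\F_{q^n}^r\}$ of $\F_q$-linear maps $U\to\F_{q^n}$, shows its nonzero elements have rank at least $d-k$ (evasiveness) and that it has size $q^{rn}$ (spanning), and then applies the Singleton-type bound to get $rn\le d(n-d+k+1)$, from which both parts follow by substituting $d=n+k$ and $d=n+k-r+2$, the non-spanning case $\dim_{q^n}\la U\ra_{\F_{q^n}}=r-1$ being dispatched first. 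Your part (2) is a genuinely different argument --- Delsarte duality plus the generalized Blokhuis--Lavrauw bound of \cite{CsMPZ2019} --- and in the range $n>r$ it is sound: the hypotheses of Theorem \ref{thm:dual2} hold, Remark \ref{ddremark} supplies the span condition, the ``degenerate'' case $\dim_{q^n}\la\bar U\ra_{\F_{q^n}}\le t-k-2$ you worry about is vacuous (a $(t-k-2)$-dimensional $\F_{q^n}$-subspace containing $\la\bar U\ra_{\F_{q^n}}$ would meet $\bar U$ in dimension $t>t-k-2$, against the conclusion of Theorem \ref{thm:dual2}, so the span has dimension at least $t-k-1$), and your localization of the roots of $f(t)=t^2-(n+k+1)t+rn$ is arithmetically correct. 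Your aside that Result \ref{Delsarte}, in its stated $\min/\max$ form, becomes vacuous once $t>n$ is also accurate.

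There is, however, a genuine gap at the boundary case $n=r$, which the hypothesis of part (2) permits: there $r-1\le k<r-2+n/(r-1)$ forces $k=r-1$. Your parenthetical claim that the hypotheses of Theorem \ref{thm:dual2} ``hold in the above range'' is false in this case: the minimal violating dimension is $t=n+k-r+2=r+1$, for which $k=t-2$, while the theorem requires the strict inequality $k<t-r+h-1=t-2$. So for $n=r$ your argument rules out $t\ge r+2$ (where indeed $f(t)=(t-r)^2\le 0$ is absurd) but cannot touch $t=r+1$; concretely, it leaves open the existence of a $4$-dimensional $(2,2)_q$-evasive subspace of $V(3,q^3)$, which the theorem forbids and which your part (1) (giving only $\dim_q U\le 4$ there) does not forbid either. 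The hole can be patched with your own toolkit: if $n=r$ and $k=r-1$, then either $U$ lies in a hyperplane, whence $\dim_q U\le k<n+k-r+1$, or $U$ spans $V$ and is $(r-1)$-scattered, so the bound of \cite{CsMPZ2019} applied directly to $U$ gives $\dim_q U\le rn/r=n=n+k-r+1$. But as written, your proof of part (2) is incomplete exactly at $n=r$.
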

	\begin{proof}
		By definition, $\dim_{q^n}\la U\ra_{q^n}\geq r-1$. First assume $\dim_{q^n}\la U\ra_{q^n}= r-1$.
		Then $\dim_q U \leq k \leq n+k-1$ proving the first part.
		Since $k\geq r-1$, the condition of the second part implies $n > r-1$ and hence $\dim_q U \leq k < n+k-r+1$ proving the second part.
		
		Thus from now on we assume $\dim_{q^n}\la U\ra_{q^n}=r$. Fix an $\F_{q^n}$-basis in $V$ and for ${\bf x} \in V$ denote the $i$-th coordinate w.r.t. this basis by $x_i$. 
		For ${\bf a}=(a_0,\ldots,a_{r-1})\in \F_{q^n}^r$ define the $\F_q$-linear map $G_{\bf a} \colon {\bf x}\in U \mapsto \sum_{i=0}^{r-1}a_i x_i\in\F_{q^n}$ and put $\cC_U:=\{G_{\bf a} : {\bf a}\in\F_{q^n}^r\}$.
		
		Let $d$ denote the dimension of $U$ over $\F_q$.
		First we show that the non-zero maps of $\cC_U$ have rank at least $d-k$.
		Indeed, if ${\bf a}\neq\mathbf{0}$, then $\mathbf{u} \in \ker G_{\bf a}$ if and only if $\sum_{i=0}^{r-1} a_iu_i=0$, i.e. $\ker G_{\bf a}= U \cap H$, where $H$ is the hyperplane $[a_0,a_1,\ldots,a_{r-1}]$ of $V$.
		Since $U$ is $(r-1,k)_q$-evasive, it follows that $\dim \ker G_{\bf a} \leq k$ and hence the rank of $G_{\bf a}$ is at least $d-k$. Next we show that any two maps of $\cC_U$ are different.
		Suppose for the contrary $G_{\bf a}=G_{\bf b}$, then $G_{\bf a- \bf b}$ is the zero map.
		If ${\bf a - \bf b \neq 0}$, then $U$ would be contained in the hyperplane
		$[a_0-b_0,a_1-b_1,\ldots,a_{r-1}-b_{r-1}]$, a contradiction since $\la U \ra_{\F_{q^n}}=V$.
		Hence, $|\mathcal{C}_U|=q^{nr}$.
		
		The elements of $\cC_U$ form an $nr$-dimensional $\F_q$-subspace of $\mathrm{Hom}_{\F_q}(U,\F_{q^n})$ and the non-zero maps of $\cC_U$ have rank at least $d-k$. By Result \ref{Delsarte} we get	$|\cC_U|=q^{rn} \leq q^{d(n-d+k+1)}$ and hence
		\begin{equation}
			\label{rdbound}
			rn \leq d(n-d+k+1).
		\end{equation}
		
		To prove the first part, suppose for the contrary $d\geq n+k$.
		Substituting in \eqref{rdbound} gives $rn \leq n+k$ and hence $(r-1)n \leq k$, contradicting our assumption.
		
		\medskip
		
		To prove the second part, suppose for the contrary $d\geq n+k-r+2$. By \eqref{rdbound} we get $rn \leq rn+kr-r^2+2r-n-k+r-2$
		and hence $r-2+n/(r-1)\leq k$,	a contradiction.
	\end{proof}
	
Motivated by the proof of \cite[Lemma 1]{THR}, we present another bound which also implies the first bound in Theorem \ref{bound}.

\begin{theorem}
\label{double}
Let $U$ be an $(h,k)_q$-evasive subspace of $V=V(r,q^n)$. Then
\[|U|\leq \frac{(q^k-1)(q^{rn}-1)}{q^{hn}-1}+1.\]
\end{theorem}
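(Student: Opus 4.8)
The plan is to prove the bound by a double counting (Plotkin/Singleton-type) argument on incidences between the nonzero vectors of $U$ and the $h$-dimensional $\F_{q^n}$-subspaces of $V$. Write $d=\dim_q U$, so that $|U|=q^d$, and consider the set of incident pairs
\[\cP=\{({\bf u},H) : {\bf u}\in U\setminus\{{\bf 0}\},\ H \mbox{ an $h$-dimensional } \F_{q^n}\mbox{-subspace of }V,\ {\bf u}\in H\}.\]
The whole point is that $|\cP|$ can be estimated in two ways: from the side of the vectors ${\bf u}$, which gives an \emph{exact} count, and from the side of the subspaces $H$, where the $(h,k)_q$-evasive hypothesis supplies an \emph{upper} bound. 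Comparing the two yields the inequality.

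First I would count from the vector side. Each nonzero ${\bf u}\in V$ spans a fixed $1$-dimensional $\F_{q^n}$-subspace $\la {\bf u}\ra_{\F_{q^n}}$, and the $h$-dimensional $\F_{q^n}$-subspaces containing ${\bf u}$ correspond bijectively to the $(h-1)$-dimensional $\F_{q^n}$-subspaces of the quotient $V/\la{\bf u}\ra_{\F_{q^n}}\cong V(r-1,q^n)$. Hence every nonzero ${\bf u}$ lies in exactly $\binom{r-1}{h-1}_{q^n}$ such $H$, independently of ${\bf u}$, and therefore $|\cP|=(q^d-1)\binom{r-1}{h-1}_{q^n}$. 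Counting from the subspace side, there are $\binom{r}{h}_{q^n}$ choices of $H$ in total, and for each of them the evasive condition forces $\dim_q(U\cap H)\leq k$, so $H$ contains at most $q^k-1$ nonzero vectors of $U$. This gives $|\cP|\leq (q^k-1)\binom{r}{h}_{q^n}$, and combining the two expressions yields
\[(q^d-1)\binom{r-1}{h-1}_{q^n}\leq (q^k-1)\binom{r}{h}_{q^n}.\]

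To finish I would simplify the ratio of Gaussian binomials. Writing each coefficient as a product, almost all factors cancel and one is left with the clean identity $\binom{r}{h}_{q^n}\big/\binom{r-1}{h-1}_{q^n}=(q^{rn}-1)/(q^{hn}-1)$, so that $q^d-1\leq (q^k-1)(q^{rn}-1)/(q^{hn}-1)$, which is exactly the claimed bound after adding $1$. This last step is routine algebra rather than a genuine obstacle. If there is any subtlety to guard against, it is the edge cases in the counting: one should record that the number $\binom{r-1}{h-1}_{q^n}$ is the same for every nonzero vector (this uses only that the Desarguesian spread is a spread, so the count is transitive and well defined), and that the spanning condition $\dim_{q^n}\la U\ra_{\F_{q^n}}\geq h$ guarantees the family of $h$-dimensional subspaces over which we count is nonempty, though in fact the inequality holds verbatim even without invoking it. No stronger structural input than the definition of $(h,k)_q$-evasiveness is needed.
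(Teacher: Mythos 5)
Your proposal is correct and is essentially the paper's own proof: both arguments double count incidences between vectors of $U$ and $h$-dimensional $\F_{q^n}$-subspaces, bound each intersection via the evasive hypothesis, and simplify the resulting ratio of Gaussian binomial coefficients (the paper includes the zero vector and writes the point count as $\binom{r}{h}_{q^n}\frac{q^{hn}-1}{q^{rn}-1}$ rather than $\binom{r-1}{h-1}_{q^n}$, but these are the same quantity and the inequalities coincide).
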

\begin{proof}
First note that the number of $h$-dimensional $\F_{q^n}$-subspaces of $V$ is the Gaussian binomial coefficient
\[
\binom{r}{h}_{q^n}=
\frac{(q^{rn}-1)(q^{rn}-q^n)\ldots(q^{rn}-q^{(h-1)n})}{(q^{hn}-1)(q^{hn}-q^n)\ldots(q^{hn}-q^{(h-1)n})}.\]
The number of $h$-dimensional $\F_{q^n}$-subspaces of $V$ containing a fixed non-zero vector ${\bf x}$ is
\[\binom{r}{h}_{q^n}\frac{q^{hn}-1}{q^{rn}-1}.\]
Consider the pairs $({\bf x},H)$, where $H$ is an $h$-dimensional subspace of $V$ and ${\bf x}$ is a vector of $H$.
Then ${\bf 0}$ is contained in every $h$-dimensional $\mathbb{F}_{q^n}$-subspace and hence double counting the pairs above yields
\[\binom{r}{h}_{q^n}+(|U|-1)\binom{r}{h}_{q^n}\frac{q^{hn}-1}{q^{rn}-1}\leq q^k\binom{r}{h}_{q^n},\]
and the assertion follows.
\end{proof}

\begin{corollary}[First bound in Theorem \ref{bound}]
	Let $U$ be $(r-1,k)_q$-evasive with $k<(r-1)n$. Then $\dim_q U \leq n+k-1$. 
\end{corollary}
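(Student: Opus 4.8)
The plan is to derive this directly from Theorem \ref{double} by specializing $h=r-1$ and then estimating the resulting bound. Applying Theorem \ref{double} with $h=r-1$ gives
\[|U| \leq \frac{(q^k-1)(q^{rn}-1)}{q^{(r-1)n}-1}+1,\]
and since $|U|$ is a power of $q$, it suffices to show that the right-hand side is strictly less than $q^{n+k}$: this forces $\dim_q U < n+k$, i.e. $\dim_q U \leq n+k-1$.

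The key manipulation is to isolate the dominant term of the quotient. Writing $q^{rn}-1 = q^n(q^{(r-1)n}-1)+(q^n-1)$ yields
\[\frac{q^{rn}-1}{q^{(r-1)n}-1}=q^n+\frac{q^n-1}{q^{(r-1)n}-1}.\]
Multiplying through by $q^k-1$ splits the bound into the main contribution $(q^k-1)q^n = q^{n+k}-q^n$ and an error term $(q^k-1)\frac{q^n-1}{q^{(r-1)n}-1}$.

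This is precisely where the hypothesis $k<(r-1)n$ enters: it guarantees $q^k-1<q^{(r-1)n}-1$, so the factor $\frac{q^k-1}{q^{(r-1)n}-1}$ is strictly below $1$ and the error term is strictly less than $q^n-1$. Adding the $+1$ coming from Theorem \ref{double}, the whole expression is strictly less than $q^{n+k}-q^n+(q^n-1)+1=q^{n+k}$, which completes the argument.

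I do not expect any real obstacle here; the proof is a routine estimate. The only points requiring mild care are the integrality/strictness step that converts $|U|<q^{n+k}$ into the clean dimension bound, and checking that the hypothesis $k<(r-1)n$ is used exactly to control the error term. One could of course also read this off from the first part of Theorem \ref{bound} directly, but stating it as a corollary of Theorem \ref{double} highlights that the sharper counting bound already subsumes the earlier Singleton-type estimate.
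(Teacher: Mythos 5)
Your proof is correct and follows essentially the same route as the paper: specialize Theorem \ref{double} to $h=r-1$, use that $|U|$ is a power of $q$ to reduce to the strict inequality $\frac{(q^k-1)(q^{rn}-1)}{q^{(r-1)n}-1}+1<q^{n+k}$, and verify it using $k<(r-1)n$. The only difference is cosmetic — you split the quotient into a main term $q^{n+k}-q^n$ plus an error term bounded by $q^n-1$, whereas the paper rearranges the inequality into the equivalent form $q^{-n}(q^n-1)(q^{rn}-q^{k+n})>0$ — but both are the same routine estimate.
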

\begin{proof}
	In Theorem \ref{double} put $h=r-1$. Then
\[|U|\leq \frac{(q^k-1)(q^{rn}-1)}{q^{rn-n}-1}+1\]
follows. Since $|U|$ is a $q$-power, to prove our assertion, it is enough to show
\[\frac{(q^k-1)(q^{rn}-1)}{q^{rn-n}-1}+1< q^{n+k}.\]
After rearranging, this is equivalent to
\[q^{-n} \left(q^n-1\right) \left(q^{nr} - q^{k+n}\right)>0,\]
which clearly holds since $k<(r-1)n$.
\end{proof}

	In the next result we show that the second bound of Theorem \ref{bound} is sharp.
	
	\begin{proposition}
		\label{b1}
		If $k< r-2+\frac n {r-1}$ then in $V(r,q^n)$ there exist maximum $(r-1,k)_q$-evasive subspaces of dimension $n+k-r+1$.
	\end{proposition}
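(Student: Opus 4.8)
The plan is to establish the claimed dimension $n+k-r+1$ in two halves. The upper bound is already available: it is exactly the second part of Theorem \ref{bound}, which says that under the hypothesis $k<r-2+\frac{n}{r-1}$ every $(r-1,k)_q$-evasive subspace of $V(r,q^n)$ has dimension at most $n+k-r+1$. Hence the substance of the proof is a \emph{construction} attaining this dimension, and I would build it by inflating the Gabidulin example (Example \ref{Gabidulin}) by means of the one-by-one enlargement of Proposition \ref{banale}.

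First I would record the numerical facts that make the construction fit together. Since every $(r-1,k)_q$-evasive subspace satisfies $k\geq h=r-1$ (as explained right after Definition \ref{maindef}), the hypothesis gives $r-1\leq k<r-2+\frac{n}{r-1}$, whence $1<\frac{n}{r-1}$ and therefore $n\geq r$. This is precisely the condition required for Example \ref{Gabidulin}, so the $n$-dimensional $\F_q$-subspace $\{(x,x^q,\ldots,x^{q^{r-1}}):x\in\F_{q^n}\}$ is a maximum $(r-1,r-1)_q$-evasive subspace of $V(r,q^n)$. This will serve as the seed of the construction.

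Next I would apply Proposition \ref{banale} to this subspace with $h=r-1$, $t=n$, and shift $s=k-r+1$. The shift is non-negative because $k\geq r-1$, and it meets the upper constraint $s\leq rn-t=(r-1)n$ of Proposition \ref{banale}, since
\[
k-r+1<\tfrac{n}{r-1}-1<n\leq (r-1)n.
\]
Proposition \ref{banale} then produces an $(r-1,(r-1)+s)_q=(r-1,k)_q$-evasive subspace of dimension $n+s=n+k-r+1$, exactly as desired. Combining this with the upper bound from Theorem \ref{bound} shows the constructed subspace is maximum.

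There is no deep obstacle in this argument; the only point requiring genuine care is the bookkeeping at the boundary. Specifically, one must check that the single hypothesis $k<r-2+\frac{n}{r-1}$ simultaneously forces $n\geq r$ (so that the Gabidulin seed exists) and keeps the enlargement parameter $s=k-r+1$ inside the admissible range $[0,(r-1)n]$ of Proposition \ref{banale}. Both checks are elementary inequalities once the observation $k\geq r-1$ is in place, so I expect the write-up to be short.
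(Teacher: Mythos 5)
Your proof is correct and takes essentially the same route as the paper: both use the Gabidulin example (Example \ref{Gabidulin}, available since the hypothesis forces $n\geq r$) as the seed, enlarge it by $k-r+1$ extra dimensions to reach dimension $n+k-r+1$, and invoke the second bound of Theorem \ref{bound} for maximality. The only cosmetic difference is the enlargement mechanism: the paper takes a direct sum with a $(k-r+1)$-dimensional $\F_q$-subspace contained in a single one-dimensional $\F_{q^n}$-subspace meeting the seed trivially, whereas you add vectors one at a time via Proposition \ref{banale}; the resulting parameters are identical.
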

	\begin{proof}
		From the assumption on $k$, we get $k\leq n+r-1$. Let $W$ be an $n$-dimensional $(r-1,r-1)_q$-evasive subspace of $V(r,q^n)$, cf. Example \ref{Gabidulin} (note that $n\geq r$ follows from $k\geq r-1$) and $W'$ be an $\F_q$-subspace of dimension $k-r+1$ contained in a $1$-dimensional $\F_{q^n}$-subspace $\langle {\bf v}\rangle_{\F_{q^n}}$ of $V(r,q^n)$, with $\langle {\bf v}\rangle_{\F_{q^n}}\cap W=\{ {\bf 0}\}$. Then the direct sum $W\oplus W'$ is an $(r-1,k)_q$-evasive subspace of dimension $n+k-r+1$. 
	\end{proof}
	
	In the next result we show that the first bound of Theorem \ref{bound} is sharp when $k \geq (r-2)(n-1)+1$. 
	
	\begin{proposition}
		\label{ex00}
		If $k \geq (r-2)(n-1)+1$, then in $V(r,q^n)$ there exist $(r-1,k)_q$-evasive subspaces of dimension $n+k-1$.
	\end{proposition}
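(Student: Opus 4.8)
The plan is to build the subspace by induction on $r$, or equivalently by iterating Proposition \ref{onebyone}, maintaining at every stage the invariant that the $\F_q$-dimension equals $n$ plus the evasiveness index minus one; that is, each intermediate subspace attains equality in the first bound of Theorem \ref{bound}. Throughout I would work in the only meaningful range $(r-2)(n-1)+1\le k\le (r-1)n+1$: for $k>(r-1)n+1$ the claimed dimension $n+k-1$ exceeds $rn$ and the statement is vacuous, while for $(r-1)n\le k\le (r-1)n+1$ it is trivial, since then every $\F_q$-subspace is $(r-1,k)_q$-evasive and one just takes any subspace of dimension $n+k-1\le rn$.

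For the base case $r=2$ I would start from Example \ref{Gabidulin} (valid as $n\ge 2$), which gives an $n$-dimensional $(1,1)_q$-evasive subspace $W$ of $V(2,q^n)$, and then apply Proposition \ref{banale} $s$ times for $0\le s\le n$. This yields a $(1,1+s)_q$-evasive subspace of dimension $n+s$; writing $k=1+s$ gives a $(1,k)_q$-evasive subspace of dimension $n+k-1$ for every $1\le k\le n+1$, which is exactly the range $[(2-2)(n-1)+1,\,(2-1)n+1]$.

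For the inductive step, assume the statement in $V(r,q^n)$ and let $k$ satisfy $(r-1)(n-1)+1\le k\le rn+1$. I would choose $s=n-1$ when $k\le rn$ and $s=n$ when $k=rn+1$, and set $k':=k-s$. A direct check then shows that $k'$ lies in the inductive range $[(r-2)(n-1)+1,\,(r-1)n+1]$, so the hypothesis provides an $(r-1,k')_q$-evasive subspace of dimension $d=n+k'-1$ in $V(r,q^n)$. The decisive point is that $d-k'=n-1$ independently of $k'$, so the constraint $d-k'\le s\le n$ of Proposition \ref{onebyone} is satisfied (indeed $s=n-1$ meets it with equality), and that proposition produces an $(r,k'+s)_q=(r,k)_q$-evasive subspace of dimension $d+s=n+k-1$ in $V(r+1,q^n)$, closing the induction and advancing the covered range from $[(r-2)(n-1)+1,(r-1)n+1]$ to $[(r-1)(n-1)+1,rn+1]$.

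I expect the only delicate part to be the index bookkeeping at the two endpoints: one must verify that subtracting $s\in\{n-1,n\}$ from $k$ keeps $k'$ inside the inductive interval, and that every admissible $k$ is actually reached (which holds because both $k$ and $k'$ sweep full integer intervals while $s$ takes the two consecutive values $n-1,n$). There is no genuine geometric obstacle: the evasiveness of each intermediate subspace and the required span condition $\dim_{q^n}\la\,\cdot\,\ra_{\F_{q^n}}\ge r-1$ are already guaranteed by Propositions \ref{banale} and \ref{onebyone}. It is worth emphasizing that this construction legitimately beats the stronger bound $rn/(h+1)$ precisely because we are in the regime $n<h+1$, where that bound does not apply; this is what makes dimension $n+k-1$ attainable here.
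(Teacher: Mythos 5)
Your proof is correct and takes essentially the same route as the paper: both arguments climb from a maximum scattered ($(1,1)_q$-evasive) subspace of $V(2,q^n)$ by iterating Proposition \ref{onebyone} with $s=n-1$, and use Proposition \ref{banale} to reach the larger values of $k$. The only difference is bookkeeping—the paper builds just the minimal case $k=(r-2)(n-1)+1$ by the iteration and applies Proposition \ref{banale} once at the end in $V(r,q^n)$, whereas you apply it at the base level $r=2$ and carry the full interval of admissible $k$ through the induction, which also makes the boundary cases $k\geq (r-1)n$ explicit.
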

	\begin{proof}
		If $r=2$ and $k=n$ then $V(2,q^n)$ is $(1,n)_q$-evasive of dimension $2n$. If $r=2$ and $k<n$ then the result follows from Proposition \ref{b1}. From now on assume $r>2$.
		By Proposition \ref{onebyone} with $d=n$ and $s=n-1$, starting from a maximum $(1,1)_q$-evasive  subspace in $V(2,q^n)$, we get a $(2,n)_q$-evasive subspace of dimension $2n-1$ in $V(3,q^n)$.
		Continuing with this process it is possible to construct an $(r-1,(r-2)(n-1)+1)_q$-evasive subspace in $V(r,q^n)$ of dimension $(r-1)(n-1)+1$. By Proposition \ref{banale} the result follows for $k> (r-2)(n-1)+1$ as well.
	\end{proof}
	
	When $rn$ is even then we can prove the sharpness of the first bound of Theorem \ref{bound} also for smaller values of $k$.
	
	\begin{proposition}
		\label{fromscattered}
		If there exists a $t$-dimensional scattered subspace in $V(r,q^n)$ then there exists an $(r-1,(r-1)n+1-t)_q$-evasive subspace of dimension 
		$(rn-t)$. In particular, if $rn$ is even and $k\geq rn/2-n+1$ then there exist $(r-1,k)_q$-evasive subspaces of dimension $n+k-1$.
	\end{proposition}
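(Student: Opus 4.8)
The plan is to obtain the first assertion as the instance $h=k=1$ of the ``ordinary'' dual, Proposition \ref{d2nuovo}, and then to derive the ``in particular'' statement by combining the known maximum scattered subspace (Result \ref{result}, part (2)) with the enlargement procedure of Proposition \ref{banale}.

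First I would take a $t$-dimensional scattered subspace $U$ of $V(r,q^n)$; by definition this is exactly a $(1,1)_q$-evasive subspace. Applying Proposition \ref{d2nuovo} with $h=k=1$ (whose hypothesis $k<hn$ here reads $1<n$, which holds in the non-degenerate range $n\geq 2$) immediately yields that $U^{\tau'}$ is an $(rn-t)$-dimensional $(r-1,(r-1)n+1-t)_q$-evasive subspace of $V$. This is precisely the first assertion, so essentially no extra work is needed.

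For the second assertion, assume $rn$ is even. By Result \ref{result}, part (2), there is a scattered subspace of dimension $t=rn/2$, and feeding $t=rn/2$ into the first part produces an $(r-1,k_0)_q$-evasive subspace of dimension $rn-t=rn/2$, where
\[
k_0=(r-1)n+1-\tfrac{rn}{2}=\tfrac{rn}{2}-n+1.
\]
The key bookkeeping step is to observe that this dimension $rn/2$ equals $n+k_0-1$, so the subspace already has the desired shape ``$\dim = n+k-1$'' for the value $k=k_0$.

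Finally, I would raise $k$ from $k_0$ to any prescribed $k\ge k_0$ using Proposition \ref{banale}: starting from the $(r-1,k_0)_q$-evasive subspace of dimension $n+k_0-1$ and choosing $s=k-k_0$, that proposition yields an $(r-1,k)_q$-evasive subspace of dimension $(n+k_0-1)+s=n+k-1$, exactly as claimed. I expect the whole argument to be routine; the only point requiring attention is the admissible range, since Proposition \ref{banale} needs $0\le s\le rn-(n+k_0-1)$, i.e.\ $k\le (r-1)n+1$. This is automatic because a subspace of dimension $n+k-1$ cannot exceed $rn$, and for the borderline values $k\ge (r-1)n$ every $\F_q$-subspace is trivially $(r-1,k)_q$-evasive, so those cases are covered directly. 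Thus the mild obstacle is entirely the verification of the parameter arithmetic above together with the check of the hypothesis $k<hn$ when invoking Proposition \ref{d2nuovo}.
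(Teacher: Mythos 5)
Your proof is correct and takes essentially the same route as the paper: the first assertion is obtained by applying Proposition \ref{d2nuovo} to a scattered (i.e.\ $(1,1)_q$-evasive) subspace, and the second by feeding the $rn/2$-dimensional scattered subspace from Result \ref{result} into the first part and then raising $k$ via Proposition \ref{banale}. Your explicit bookkeeping of the parameter arithmetic and of the admissible range of $k$ is merely a more detailed rendering of the paper's two-line proof.
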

	\begin{proof}
		The first part follows from Proposition \ref{d2nuovo}. The second part follows from the fact that when $rn$ is even then there exist scattered subspaces of dimension $rn/2$, cf. Result \ref{result}, and from Proposition \ref{banale}.
	\end{proof}
	
	\bigskip
	
	The first case when we do not know the sharpness of Theorem \ref{bound} is when $r=3$, $n=5$ and $k=4$. In this case our bound states that the 
	dimension of an $(2,q^4)$-scattered subspace is at most $8$. The existence of such subspaces will follow from the results of Section \ref{Sec:5} and Proposition \ref{fromscattered} for infinitely many values of $q$.
	
	\bigskip
	
	To prove the next result, we follow some of the ideas of the proof of \cite[Theorem 2.3]{CsMPZ2019}.
	
	\begin{theorem}
		\label{bound1}
		Suppose that there exists a $d_2$-dimensional $(h_1-1,k_2)_q$-evasive subspace $W$ in $V(h_1,q^n)$.
		Let $U$ be a $d_1$-dimensional $(h_1,k_1)_q$-evasive subspace of $V=V(r,q^n)$ with $ d_2+h_1-k_2-1 > k_1$.
		Then
		\begin{equation}
			\label{newbound}
			d_1 \leq rn-\frac{rnh_1}{d_2}.
		\end{equation}
	\end{theorem}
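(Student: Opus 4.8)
The plan is to run a double-counting (incidence) argument that refines Theorem~\ref{double}, with the auxiliary subspace $W$ supplying the extra control that forces the dimension $d_2$ into the denominator of the bound; this is the point where we follow the mechanism of \cite[Theorem 2.3]{CsMPZ2019}. Write $h=h_1$. The two quantities to be compared are, on one side, the number of incident pairs coming from $U$ --- governed by how many of the relevant $h$-dimensional $\F_{q^n}$-subspaces pass through a fixed nonzero vector of $U$ --- and, on the other side, the number permitted by the $(h,k_1)_q$-evasiveness of $U$, namely at most $q^{k_1}-1$ nonzero vectors of $U$ in each such subspace. The naive version of this count is exactly Theorem~\ref{double} and only yields the weak bound $\dim_q U\lesssim k_1+rn-hn$; the role of $W$ is to replace the plain family of all $h$-dimensional subspaces by a structured family whose combinatorics is dictated by $W$, thereby sharpening the estimate to the stated linear inequality.

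To build the structured family I would embed the model space $V(h,q^n)$ as the $h$-dimensional $\F_{q^n}$-subspaces $H$ of $V$ and transport a copy of $W$ into each. Inside such an $H\cong V(h,q^n)$ the subspace $W$ is $(h-1,k_2)_q$-evasive of dimension $d_2$, so every $(h-1)$-dimensional $\F_{q^n}$-subspace $S\subset H$ meets it in dimension at most $k_2$, while by Proposition~\ref{scendere} the subspace $U$ is itself $(h-1,k_1-1)_q$-evasive. Counting flags $S\subset H$ weighted by the $W$-structure, the dimension $d_2$ appears as the yield of incidences produced per configuration and $k_2$ as the admissible overcount; the hypothesis $d_2+h-k_2-1>k_1$ is precisely the inequality guaranteeing that the incidences manufactured through $W$ strictly exceed the $k_1$ that $U$'s evasiveness allows, unless $\dim_q U$ is small. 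In other words, $W$ certifies that each nonzero vector of $U$ is forced to lie on enough members of the family, and the number of such members per point carries the factor $d_2$.

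Combining the two estimates gives a single inequality between $q$-powers. After cancelling the common Gaussian-binomial factors exactly as in the proof of Theorem~\ref{double} and taking $q$-logarithms, the surviving linear relation is $d_1 d_2\le rn(d_2-h)$, i.e.\ $d_1\le rn-\tfrac{rnh}{d_2}$, as claimed. As in the proof of Theorem~\ref{bound}, the degenerate cases $\dim_{q^n}\la U\ra_{\F_{q^n}}<r$ and $\dim_{q^n}\la U\ra_{\F_{q^n}}=h$ should be disposed of separately at the start, since there the trivial bound $\dim_q U\le k_1$ already suffices.

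The main obstacle is the construction and weighting of the structured family so that $d_2$ enters the denominator \emph{exactly} rather than asymptotically: one must show that the flag count built from $W$ is both large enough (a lower bound, using that $W$ has full dimension $d_2$ and sticks out of every hyperplane of $V(h,q^n)$ in dimension $d_2-k_2$) and counted without repetition, and that the resulting $q$-power inequality collapses to the clean linear bound under the single hypothesis $d_2+h-k_2-1>k_1$. A secondary subtlety is that a straightforward rank-metric reformulation --- packaging the $\F_{q^n}$-linear maps $V\to V(h,q^n)$ restricted to $U$ as a Delsarte code and invoking Result~\ref{Delsarte}, in the spirit of Theorem~\ref{bound} --- does not by itself deliver the factor $d_2$, because maps of intermediate $\F_{q^n}$-rank have large kernels that the $(h,k_1)_q$-evasiveness of $U$ does not control; it is exactly these maps that $W$ is introduced to handle, and reconciling the two viewpoints is where the care is needed.
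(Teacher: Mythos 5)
Your intuition about the role of $W$ --- that it must supply the factor $d_2$ and that the troublesome configurations are the ``intermediate rank'' ones --- is correct, but the mechanism you propose does not deliver the bound, and this is a genuine gap rather than a missing detail. A point--subspace incidence count of the kind in Theorem~\ref{double} produces inequalities between $q$-powers whose exponents are \emph{sums} of dimensions: each side is a number of incidences, bounded by (number of subspaces in the family) times (number of points allowed per subspace), and after the Gaussian binomials cancel one is left with exponents like $k+rn-hn$. The inequality you actually need, $rnh_1\le d_2(rn-d_1)$, contains a \emph{product} of two unknown dimensions. Your sketch never defines the ``structured family'' or its weighting, and the pivotal step --- ``after cancelling the common Gaussian-binomial factors \dots the surviving linear relation is $d_1d_2\le rn(d_2-h)$'' --- is asserted, not derived. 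As written there is no family, no pair of quantities being compared, and hence no proof; and it is not plausible that any unweighted incidence count of this shape collapses to a product-type inequality.

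The paper obtains the product structure by counting \emph{linear maps}, not incidences, and in the direction opposite to the one you dismiss in your last paragraph. Fix an $\F_q$-linear $G\colon V\to V$ with $\ker G=U$, so $\dim_q \mathrm{Im}\,G=rn-d_1$, and to each tuple $(\mathbf{u}_1,\ldots,\mathbf{u}_{h_1})\in V^{h_1}$ associate the $\F_q$-linear map $W\to \mathrm{Im}\,G$ given by $(\lambda_1,\ldots,\lambda_{h_1})\mapsto G(\lambda_1\mathbf{u}_1+\cdots+\lambda_{h_1}\mathbf{u}_{h_1})$. There are $q^{rnh_1}$ tuples, while the total number of $\F_q$-linear maps from $W$ to $\mathrm{Im}\,G$ is $q^{d_2(rn-d_1)}$; the bound \eqref{newbound} follows at once provided the assignment is injective. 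Injectivity is exactly where both evasiveness hypotheses enter: if two tuples give the same map, set $\mathbf{z}_i=\mathbf{u}_i-\mathbf{v}_i$ and $T=\langle \mathbf{z}_1,\ldots,\mathbf{z}_{h_1}\rangle_{\F_{q^n}}$ with $\dim_{q^n}T=t$. The case $t=h_1$ contradicts the $(h_1,k_1)_q$-evasiveness of $U$ because $d_2\ge k_1+1$; the intermediate case $1\le t\le h_1-1$ is killed by Proposition~\ref{scendere}: the $(h_1-t,k_2-t+1)_q$-evasiveness of $W$ forces the image of the induced map $\bar\Phi$ on $W$ to have $\F_q$-dimension at least $d_2-k_2+t-1$, while the $(t,k_1-h_1+t)_q$-evasiveness of $U$ caps it at $k_1-h_1+t$, and these conflict precisely because $d_2+h_1-k_2-1>k_1$. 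So the hypothesis you rightly flagged as ``the inequality that makes things work'' is used pointwise inside an injectivity argument, not in a global count, and the factor $d_2$ enters as the $\F_q$-dimension of the \emph{domain} of the counted maps: maps out of $W$ into $V/U\cong\mathrm{Im}\,G$, rather than maps restricted to $U$ as in your rank-metric remark. If you want to salvage your write-up, replacing the incidence scheme by this map-counting scheme is the one essential repair.
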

	\begin{proof}
		Take $W$ in $\F_{q^n}^{h_1}=V(h_1,q^n)$, as in the assertion. 
		Let $G$ be an $\F_q$-linear transformation of $V$ with $\ker G=U$.
		Clearly, $\dim_q \mathrm{Im} \,G = rn-d_1$. 
		For each $(\mathbf{u}_1,\ldots,\mathbf{u}_{h_1}) \in V^{h_1}$ consider the $\F_{q^n}$-linear map
		\[ \tau_{\mathbf{u}_1,\ldots,\mathbf{u}_{h_1}} \colon (\lambda_1,\ldots,\lambda_{h_1}) \in W \mapsto \lambda_1 \mathbf{u}_1+\ldots+\lambda_{h_1} \mathbf{u}_{h_1}\in V.\]
		Consider the following set of $\F_q$-linear maps 
		\[ \C:=\{G \circ \tau_{\mathbf{u}_1,\ldots,\mathbf{u}_{h_1}} : (\mathbf{u}_1,\ldots,\mathbf{u}_{h_1}) \in V^{h_1} \}.\]
		Our aim is to show that these maps are pairwise distinct and hence $|\C|=q^{rnh_1}$.
		Suppose $G \circ \tau_{\mathbf{u}_1,\ldots,\mathbf{u}_{h_1}} =G \circ \tau_{\mathbf{v}_1,\ldots,\mathbf{v}_{h_1}}$. It follows that
		$ G \circ \tau_{\mathbf{u}_1-\mathbf{v}_1,\ldots,\mathbf{u}_{h_1}-\mathbf{v}_{h_1}}$
		is the zero map, i.e.
		\begin{equation}
			\label{ker}
			\lambda_1 (\mathbf{u}_1-\mathbf{v}_1)+ \ldots + \lambda_{h_1} (\mathbf{u}_{h_1}-\mathbf{v}_{h_1}) \in \ker G=U \mbox{ for each } (\lambda_1,\ldots,\lambda_{h_1}) \in W.
		\end{equation}
		
		\noindent For $i\in \{1,\ldots,h_1\}$ put ${\mathbf{z}}_i=\mathbf{u}_i-\mathbf{v}_i$, then $T:=\langle \mathbf{z}_1,\ldots,\mathbf{z}_{h_1}\rangle_{q^n}$ with $\dim_{q^n}T=t$. We want to show that $t=0$.
		
		First assume $t=h_1$. By \eqref{ker}
		\[ \{ \lambda_1\mathbf{z}_1+\ldots+\lambda_{h_1} \mathbf{z}_{h_1} : (\lambda_1,\ldots,\lambda_{h_1}) \in W \}\subseteq T\cap U, \]
		and $t=h_1$ yields $\dim_q (T \cap U) \geq \dim_q W=d_2\geq k_1+1$ (because of our assumption on $k_1$ and from $k_2 \geq h_1-1$), which is not possible since $T$ is an $h_1$-dimensional $\F_{q^n}$-subspace in $V$ and $U$ is $(h_1,k_1)_q$-evasive.
		
		Next assume $1\leq t\leq h_1-1$. W.l.o.g. we can assume $T=\langle \mathbf{z}_1,\ldots,\mathbf{z}_t\rangle_{q^n}$. Let $\Phi \colon \F_{q^n}^{h_1}\rightarrow T$ be the $\F_{q^n}$-linear map defined by the rule
		\[(\lambda_1,\ldots,\lambda_{h_1})\mapsto \lambda_1\mathbf{z}_1+\ldots+\lambda_{h_1} \mathbf{z}_{h_1}\] and let $\bar\Phi$ be the restriction of $\Phi$ on $W \leq \F_{q^n}^{h_1}$. It can be easily seen that
		\begin{equation}
			\label{eq5}
			\dim_{q^n}\ker \Phi =h_1-t
			\quad \mbox{ and } \quad \ker \bar\Phi =\ker \Phi\cap W.
		\end{equation}
		Also, by \eqref{ker}
		\begin{equation}
			\label{eq6.1}
			\mathrm{Im}\, \bar \Phi \subseteq T\cap U.
		\end{equation}
		By Proposition \ref{scendere}, $W$ is also an $(h_1-t,k_2-t+1)_q$-evasive subspace in $\F_{q^n}^{h_1}$ and hence taking \eqref{eq5} into account we get $\dim_q\ker \bar\Phi \leq k_2-t+1$, which yields
		\begin{equation}
			\label{eq7}
			\dim_q\mathrm{Im} \bar\Phi \geq d_2-(k_2-t+1).
		\end{equation} 
		By Proposition \ref{scendere}, $U$ is also a $(t,k_1-h_1+t)_q$-evasive subspace in $V$, thus by \eqref{eq6.1} we get $\dim_q \mathrm{Im} \bar\Phi \leq k_1-h_1+t$, contradicting \eqref{eq7}.
		
		Thus we proved $t=0$, i.e. $\mathbf{z}_i={\bf 0}$ for each $i\in\{1,\ldots h_1\}$ and hence $|\C|=q^{rnh_1}$.
		The trivial upper bound for the size of $\C$ is the size of $\F_q^{d_2\times(rn-d_1)}$ and the result follows.
	\end{proof}
	
	Then we obtain the following result which for $k<n$ slightly generalizes \cite[Theorem 2.3]{CsMPZ2019}.
	
	\begin{corollary}
		\label{nostrobound}
		If $k<n$ and $U$ is an $(h,k)_q$-evasive subspace in $V(r,q^n)$, then 
		\begin{equation}
			\label{knbound}
			\dim_q U \leq rn-\frac{rnh}{k+1}.
		\end{equation}
	\end{corollary}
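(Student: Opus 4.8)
The plan is to obtain \eqref{knbound} as a direct specialization of Theorem \ref{bound1}. Recall from the discussion after Definition \ref{maindef} that every $(h,k)_q$-evasive subspace satisfies $h\leq k$, so the hypothesis $k<n$ forces $h\leq k<n$ and in particular $n>h$. I would apply Theorem \ref{bound1} with $h_1:=h$, $k_1:=k$, $d_1:=\dim_q U$, feeding it an auxiliary subspace $W$ of dimension $d_2:=k+1$ with evasiveness parameter $k_2:=h-1$. With these choices the hypothesis $d_2+h_1-k_2-1>k_1$ reads $(k+1)+h-(h-1)-1>k$, i.e.\ $k+1>k$, which always holds, and the conclusion $d_1\leq rn-rnh_1/d_2$ becomes exactly $\dim_q U\leq rn-rnh/(k+1)$.

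Thus the whole argument reduces to producing a $(k+1)$-dimensional $(h-1,h-1)_q$-evasive subspace $W$ in $V(h,q^n)$. Since $n>h$, Example \ref{Gabidulin} supplies the $n$-dimensional maximum $(h-1,h-1)_q$-evasive subspace $W_0=\{(x,x^q,\ldots,x^{q^{h-1}}):x\in\F_{q^n}\}$ of $V(h,q^n)=\F_{q^n}^h$. As $k<n$ gives $k+1\leq n=\dim_q W_0$, I would take $W$ to be a $(k+1)$-dimensional $\F_q$-subspace of $W_0$. The intersection property is inherited: for each $(h-1)$-dimensional $\F_{q^n}$-subspace $H$ one has $W\cap H\subseteq W_0\cap H$, whence $\dim_q(W\cap H)\leq h-1$.

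The only step needing genuine care is the nontriviality clause of Definition \ref{maindef}, i.e.\ that $\dim_{q^n}\la W\ra_{\F_{q^n}}\geq h-1$, so that $W$ really qualifies as $(h-1,h-1)_q$-evasive (and so that Proposition \ref{scendere} may legitimately be invoked on $W$ inside the proof of Theorem \ref{bound1}). Here I would identify $W$ with a $(k+1)$-dimensional $\F_q$-subspace $S$ of $\F_{q^n}$ via $x\mapsto(x,x^q,\ldots,x^{q^{h-1}})$ and exploit that $k+1>h$ (since $k\geq h$): choosing $\F_q$-linearly independent elements $x_1,\ldots,x_h\in S$, the associated $h\times h$ Moore matrix $(x_j^{q^i})_{0\leq i\leq h-1}$ is nonsingular precisely because the $x_j$ are $\F_q$-independent, so the corresponding $h$ vectors already span $\F_{q^n}^h$. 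Hence $\dim_{q^n}\la W\ra_{\F_{q^n}}=h\geq h-1$. With $W$ thereby confirmed to be $(h-1,h-1)_q$-evasive, Theorem \ref{bound1} applies and yields \eqref{knbound}; I expect this Moore-matrix spanning check to be the only point beyond routine bookkeeping.
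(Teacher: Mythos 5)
Your proof is correct and takes essentially the same route as the paper: the paper likewise applies Theorem \ref{bound1} with $k_1=k$, $h_1=h$, $d_2=k+1$, $k_2=h-1$, citing Result \ref{result} for the auxiliary $(k+1)$-dimensional $(h-1,h-1)_q$-evasive subspace of $V(h,q^n)$, where you instead carve it out of Example \ref{Gabidulin} and check the spanning condition via a Moore-matrix argument. Your spanning check fills in a detail the paper's one-line citation glosses over (and it can even be had more cheaply: if $\la W\ra_{\F_{q^n}}$ had dimension at most $h-2$, then $W$ would lie inside an $(h-1)$-dimensional $\F_{q^n}$-subspace meeting the ambient evasive subspace $W_0$ in at least $k+1>h-1$ dimensions, contradicting the evasiveness of $W_0$).
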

	\begin{proof}
		Note that in $V(h,q^n)$ there exist $(h-1,h-1)_q$-evasive subspaces of dimension $k+1 \leq n$, cf. Result \ref{result}.
		Then the result follows from Theorem \ref{bound1} with $k_1=k$, $h_1=h$, $d_2=k+1$ and $k_2=h-1$.
	\end{proof}

	\section{Maximum evasive subspaces of $V(3,q^n)$, $n\leq 5$}
	\label{Sec:5}
	
	Let $U$ be a maximum $(h,k)_q$-evasive subspace of $V=V(r,q^n)$. In this section we investigate the size of $U$ for small values of $r$ and $n$. 
	First recall $h\leq k$. We will also assume $h<r$ and $k<nh$ since an $h$-dimensional $\F_{q^n}$-subspace has dimension $nh$ over $\F_q$. 
	If $k=nh$, then the whole vector space $V(r,q^n)$ is $(h,k)_q$-evasive.
	
	\medskip
	
	If $r=2$, then $h=1$ and $k<n$. By Theorem \ref{bound} $\dim_q U$ is at most $n+k-1$ and this bound is sharp, c.f. Example \ref{ex00}.
	
	\medskip
	
	From now on assume $r=3$. Then $h$ is $1$ or $2$.
	First note that when $h=k=1$, i.e. $U$ is scattered, then the bound $3n/2$ can be reached for $n$ even, cf. Result \ref{result}.
	If $h=2$ and $n \leq k < 2n$, then by Theorem \ref{bound} the dimension of $U$ is at most $n+k-1$ and by Proposition \ref{ex00} this bound can be reached. So from now on, we omit the discussion of these cases. It also means that we may assume $n\geq 3$.
	
	\medskip
	
	Recall that if $U$ has dimension $t$ over $\F_q$ then the dual of $U$ is $(r-h,(r-h)n+k-t)_q$-evasive of dimension $rn-t$ in $V(r,q^n)$.
	
	\begin{itemize}
		
		\item Case $n=3$. 
		
		If $h=k=1$, then $U$ has dimension at most $4$ and such examples can be easily obtained by taking the $\F_q$-span of 
		a maximum scattered subspace in $L=V(2,q^3) \leq V$ and a vector ${\bf v} \in V \setminus L$.
		
		If $h=1$ and $k=2$ then Corollary \ref{nostrobound} gives $\dim_q U \leq 6$ and the dual of Example \ref{subgeom} with $m=1$ reaches this bound.
		
		If $h=2$ and $k=2$ then Corollary \ref{nostrobound} gives $\dim_q U \leq 3$ and Example \ref{subgeom} with $m=1$ reaches this bound.

		\item Case $n=4$. 
		
		If $h=1$ and $k=2$ then Corollary \ref{nostrobound} gives $\dim_q U \leq 8$, and this can be reached as the dual of 
		Example \ref{Gabidulin}.
		
		If $h=1$ and $k=3$ then Corollary \ref{nostrobound} gives $\dim_q U \leq 9$, and this can be reached starting from the previous example, cf. Proposition \ref{banale}.
		
		If $h=2$ and $k=2$ then Corollary \ref{nostrobound} gives $\dim_q U \leq 4$, and this can be reached, see Example \ref{Gabidulin}. 
		
		If $h=2$ and $k=3$ then Theorem \ref{bound} gives $\dim_q U\leq 6$, and it is reached by the maximum scattered subspaces of $V$, cf. Result \ref{result}.

		\item Case $n=5$.
		
		If $h=k=1$ then the Blokhuis--Lavrauw bound gives $\dim_q U\leq 7$.
		
		If $h=1$ and $k=2$ then Corollary \ref{nostrobound} gives $\dim_q U \leq 10$, and this can be reached as the dual of Example \ref{Gabidulin}. 
		
		If $h=1$ and $k=3$ then Corollary \ref{nostrobound} gives $\dim_q U \leq 11$, and this can be reached from the previous example, cf. Proposition \ref{banale}.
		
		If $h=1$ and $k=4$ then Corollary \ref{nostrobound} gives $\dim_q U \leq 12$, and this can be reached from the previous example, cf. Proposition \ref{banale}.
		
		If $h=2$ and $k=2$ then Corollary \ref{nostrobound} gives $\dim_q U \leq 5$, and this can be reached, cf. Example \ref{Gabidulin}.
		
		If $h=2$ and $k=3$ then the second bound of Theorem \ref{bound} gives $\dim_q U \leq 6$ and this bound is sharp, cf. Proposition \ref{b1}.
		
		If $h=2$ and $k=4$ then the first bound of Theorem \ref{bound} gives $\dim_q U \leq 8$.
	\end{itemize}
	
	We can see that the first open cases are when $n=5$ and $h=k=1$ or $h=2$ and $k=4$. 
	Note that if the bound in one of these two cases is reached then, by duality, the other bound is sharp as well.
	In the last part of this paper our aim is to find $7$-dimensional scattered subspaces of $V(3,q^5)$.
	Then the dual of such a subspace is $(2,4)_q$-evasive of dimension $8$. 
	Then by Theorem \ref{thm:dual2} and Remark \ref{ddremark} its Delsarte dual is an $8$-dimensional
	$2$-scattered $\F_q$-subspace in $V(5,q^5)$, which is maximum, by Corollary \ref{nostrobound}.
	Its dual is a $(3,9)_q$-evasive subspace of dimension $17$. 
	Here we cannot use Corollary \ref{nostrobound} since $k=9$ and $n=5$. However, we know that it is maximum by Corollary \ref{corord}. 
	
	\subsection*{Maximum scattered subspaces of $V(3,q^5)$}
	
	In this section our aim is to construct scattered subspaces of dimension $7$ in $V(3,q^5)$.
	To do this, we will work in $\F_{q^{15}}$ considered as a $3$-dimensional vector space over $\F_{q^5}$. 
	Then the one-dimensional $\F_{q^5}$-subspaces can be represented as follows: $x\in \la  u \ra_{\F_{q^5}}$ if and only if 
	$x=\lambda u$ for some $\lambda \in \F_{q^5}$ and hence $x^{q^5-1}=u^{q^5-1}$. In particular, $x$ is a root of $x^{q^5}-dx$ with $d=u^{q^5-1}$. 
	We proceed with the following steps.
	\begin{enumerate}
		\item[(1)] Find $q$-polynomials $P(x)=\sum_{i=0}^7 \alpha_i x^{q^i} \in \F_{q^{15}}[x]$ with $q^7$ roots in $\F_{q^{15}}$. 
		Then $U:=\ker_q P$ is an $\F_q$-subspace of dimension $7$. 
		\item[(2)] Apply \cite[Theorem 2.1]{csajb} to obtain conditions when $P(x)$ has at most $q$ common roots with the polynomial $x^{q^5}-d x$ for each $d\in\F_{q^{15}}$ with $d^{1+q^5+q^{10}}=1$. 
		This is equivalent to ask that $\dim_q (\la u \ra_{\F_{q^5}} \cap U)\leq 1$, where $d=u^{q^5-1}$.
		\item[(3)] Show that $P(x)$ can be chosen such that these conditions are satisfied. 
	\end{enumerate}
	
	\bigskip
	
	For some $a, b \in \F_{q^{15}}$, $a^{q^3}b \neq ab^{q^3}$ put
	
	\[R_{a,b}(x)=R(x)=
	\begin{pmatrix}
	x & x^{q^3} & x^{q^6} \\
	a & a^{q^3} & a^{q^6} \\
	b & b^{q^3} & b^{q^6}
	\end{pmatrix}=x^{q^6}(ab^{q^3}-a^{q^3}b)+x^{q^3}(a^{q^6}b-ab^{q^6})+x(a^{q^3}b^{q^6}-a^{q^6}b^{q^3}).\]
	
	Then $\ker R = \la a,b\ra_{\F_{q^3}}$ has dimension $2$ over $\F_{q^3}$ and hence dimension $6$ over $\F_q$. 
	Take any $\bar{x} \notin  \ker R$ and put $c=R(\bar{x})$. Define
	\begin{equation}
		\label{Eq:P(x)}
		P_{a,b,c}(x)=P(x)=cR(x)^q-c^qR(x).
	\end{equation}
	Then $P$ vanishes on $\la \ker R, \bar{x} \ra_{\F_q}$, i.e. on an $\F_q$-subspace of dimension $7$. 
	The coefficients of $P(x)$ are:
	\[\alpha_0=-c^q(a^{q^3}b^{q^6}-a^{q^6}b^{q^3}), \quad \alpha_1=c(a^{q^4}b^{q^7}-a^{q^7}b^{q^4}),\quad \alpha_2=0,\]
	\[\alpha_3=-c^q(a^{q^6}b-ab^{q^6}),\quad \alpha_4=c(a^{q^7}b^q-a^qb^{q^7}),\quad \alpha_5=0,\]
	\[\alpha_6=-c^q(ab^{q^3}-a^{q^3}b), \quad \alpha_7=c(a^qb^{q^4}-a^{q^4}b^q).\]
	
	By \cite[Theorem 2.1]{csajb} $U$ is scattered if and only if for each $d\in \F_{q^{15}}$ with $d^{1+q^5+q^{10}}=1$ the following two matrices are not singular at the same time:
	\[\left(
	\begin{array}{cccccccccccc}
	\alpha_7^{q^4} & \alpha_6^{q^4} & 0 & \alpha_4^{q^4} & \alpha_3^{q^4} & 0 & \alpha_1^{q^4} & \alpha_0^{q^4} & 0 & 0 & 0 & 0 \\
	0 & \alpha_7^{q^3} & \alpha_6^{q^3} & 0 & \alpha_4^{q^3} & \alpha_3^{q^3} & 0 & \alpha_1^{q^3} & \alpha_0^{q^3} & 0 & 0 & 0 \\
	0 & 0 & \alpha_7^{q^2} & \alpha_6^{q^2} & 0 & \alpha_4^{q^2} & \alpha_3^{q^2} & 0 & \alpha_1^{q^2} & \alpha_0^{q^2} & 0 & 0 \\
	0 & 0 & 0 & \alpha_7^q & \alpha_6^q & 0 & \alpha_4^q & \alpha_3^q & 0 & \alpha_1^q & \alpha_0^q & 0 \\
	0 & 0 & 0 & 0 & \alpha_7 & \alpha_6 & 0 & \alpha_4 & \alpha_3 & 0 & \alpha_1 & \alpha_0 \\
	1 & 0 & 0 & 0 & 0 & -d^{q^6} & 0 & 0 & 0 & 0 & 0 & 0 \\
	0 & 1 & 0 & 0 & 0 & 0 & -d^{q^5} & 0 & 0 & 0 & 0 & 0 \\
	0 & 0 & 1 & 0 & 0 & 0 & 0 & -d^{q^4} & 0 & 0 & 0 & 0 \\
	0 & 0 & 0 & 1 & 0 & 0 & 0 & 0 & -d^{q^3} & 0 & 0 & 0 \\
	0 & 0 & 0 & 0 & 1 & 0 & 0 & 0 & 0 & -d^{q^2} & 0 & 0 \\
	0 & 0 & 0 & 0 & 0 & 1 & 0 & 0 & 0 & 0 & -d^q & 0 \\
	0 & 0 & 0 & 0 & 0 & 0 & 1 & 0 & 0 & 0 & 0 & -d \\
	\end{array}
	\right)\]

	\[\left(
	\begin{array}{cccccccccc}
	\alpha_7^{q^3} & \alpha_6^{q^3} & 0 & \alpha_4^{q^3} & \alpha_3^{q^3} & 0 & \alpha_1^{q^3} & \alpha_0^{q^3} & 0 & 0 \\
	0 & \alpha_7^{q^2} & \alpha_6^{q^2} & 0 & \alpha_4^{q^2} & \alpha_3^{q^2} & 0 & \alpha_1^{q^2} & \alpha_0^{q^2} & 0 \\
	0 & 0 & \alpha_7^q & \alpha_6^q & 0 & \alpha_4^q & \alpha_3^q & 0 & \alpha_1^q & \alpha_0^q \\
	0 & 0 & 0 & \alpha_7 & \alpha_6 & 0 & \alpha_4 & \alpha_3 & 0 & \alpha_1 \\
	1 & 0 & 0 & 0 & 0 & -d^{q^5} & 0 & 0 & 0 & 0 \\
	0 & 1 & 0 & 0 & 0 & 0 & -d^{q^4} & 0 & 0 & 0 \\
	0 & 0 & 1 & 0 & 0 & 0 & 0 & -d^{q^3} & 0 & 0 \\
	0 & 0 & 0 & 1 & 0 & 0 & 0 & 0 & -d^{q^2} & 0 \\
	0 & 0 & 0 & 0 & 1 & 0 & 0 & 0 & 0 & -d^q \\
	0 & 0 & 0 & 0 & 0 & 1 & 0 & 0 & 0 & 0 \\
	\end{array}
	\right)
	\]

	\begin{theorem}
		Let $h$ be a non-negative integer. Consider $q=p^{15h+s}$, with $\gcd(s,15)=1$ if $p=2,3$ and with $s=1$ if $p=5$.
		Then in $V(3,q^5)$ there exist scattered subspaces of dimension $7$.
	\end{theorem}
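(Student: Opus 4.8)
The plan is to realize the construction of $U:=\langle \ker R,\bar x\rangle_{\F_q}$ explicitly and then reduce the scattered property to a concrete arithmetic condition on the coefficients $\alpha_i$ that can be verified (and shown satisfiable) via the two subresultant matrices displayed above. First I would fix a convenient choice of $a,b,\bar x\in\F_{q^{15}}$ making the coefficients $\alpha_0,\alpha_1,\alpha_3,\alpha_4,\alpha_6,\alpha_7$ as simple as possible while keeping $a^{q^3}b\neq ab^{q^3}$ and $\bar x\notin\ker R$; a natural first attempt is to take $a,b$ in a subfield and $c=R(\bar x)$ a chosen element, so that the nonzero $\alpha_i$ collapse to expressions involving only a few parameters (ideally one, say a single $t\in\F_{q^{15}}$). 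Since $\alpha_2=\alpha_5=0$ automatically, the matrices already have a rigid sparsity pattern, and the goal is to arrange that the determinants become low-degree polynomials in $d$ (and the chosen parameter) over $\F_{q^{15}}$.

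The core of the argument is step (2)--(3): by \cite[Theorem 2.1]{csajb}, $U$ is scattered precisely when, for every $d\in\F_{q^{15}}$ with $d^{1+q^5+q^{10}}=1$, the two matrices above are not simultaneously singular. So I would compute the two determinants $D_1(d)$ and $D_2(d)$ symbolically (this is where MAGMA enters, as the authors indicate) and show that their common roots never satisfy the norm-one constraint $d^{1+q^5+q^{10}}=1$. Because the bottom blocks of each matrix are the companion-type rows encoding $x^{q^5}=dx$, expanding along them turns each determinant into a $q$-polynomial-style expression in the $\alpha_i^{q^j}$ evaluated against powers of $d$; with the simplified coefficients these should factor, and I would aim to exhibit an explicit auxiliary polynomial whose nonvanishing on the norm-one locus is equivalent to the scattered property. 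The characteristic-dependent hypotheses ($\gcd(s,15)=1$ for $p=2,3$ and $s=1$ for $p=5$) are exactly the conditions guaranteeing that the resulting polynomial has no norm-one roots, presumably because certain exponents are coprime to $q^{15}-1$ or because a discriminant-type quantity is forced nonzero in those characteristics.

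The main obstacle I expect is step (3): showing that a single uniform choice of $a,b,\bar x$ works for \emph{all} admissible $q=p^{15h+s}$ simultaneously, rather than merely for finitely many $q$. The subresultant determinants depend on $q$ through the Frobenius twists $\alpha_i^{q^j}$, so one cannot simply check a finite list; instead I would phrase the non-simultaneous-singularity as the nonvanishing of a fixed polynomial identity whose truth is governed by $s\bmod 15$ and by the characteristic $p$, and then verify this identity once in each of the relevant residue/characteristic classes. Concretely, after the MAGMA computation produces candidate coefficients, I would prove that the common zero set of $D_1,D_2$ is contained in the set $\{d:d^{1+q^5+q^{10}}\neq 1\}$ by reducing to a gcd computation between an explicit polynomial and $x^{(q^{15}-1)/(q^5-1)}-1$, and argue its emptiness from the coprimality/characteristic conditions. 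The delicate point will be ruling out the sporadic bad $d$ for $p=5$, which is why that case is restricted to $s=1$; I would handle it by a separate direct check that the extra factor appearing in characteristic $5$ does not meet the norm-one locus.
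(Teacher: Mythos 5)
Your skeleton necessarily matches the paper's (the construction $P_{a,b,c}$, the criterion from \cite[Theorem 2.1]{csajb}, and a computer verification), and you correctly identify the central difficulty: one choice of $a,b,c$ must work for \emph{all} $q=p^{15h+s}$ at once, while the matrices involve the Frobenius twists $\alpha_i^{q^j}$ and $d^{q^j}$. But your mechanism for overcoming this is where the gap lies. You propose to reduce to ``a gcd computation between an explicit polynomial and $x^{(q^{15}-1)/(q^5-1)}-1$'' and to the nonvanishing of ``a fixed polynomial identity''. Neither object is fixed: $\det M(d)$ is a polynomial in $d, d^q,\ldots,d^{q^4}$, so viewed as a univariate polynomial in $d$ its degree grows with $h$, as does the degree $q^{10}+q^5+1$ of the norm polynomial; moreover the putative bad values $d$ range over $\F_{q^{15}}$, a field that changes with $h$. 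There is no single gcd to compute, and no finite list of residue classes on which a ``fixed identity'' can simply be checked -- as stated, your plan proves the theorem only one $q$ at a time.

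The paper's key device, which your proposal is missing, is twofold. First, all parameters ($\lambda$, $a=\lambda^2$, $b=\lambda^4$, $c=R_{a,b}(\lambda)$) are taken inside $\F_{p^{15}}$, so that $z^q=z^{p^s}$ for every $z\in\F_{p^{15}}$: the coefficients $\alpha_i^{q^j}$ become fixed elements of $\F_{p^{15}}$, independent of $h$. Second -- and this is the step with real content -- the conjugates $d^{q^i}$ are replaced by independent indeterminates $D_i$, giving a fixed multivariate polynomial $F_0(D_0,\ldots,D_{14})$ with $\det M(d)=F_0(d,d^q,\ldots,d^{q^{14}})$; one then closes this under Frobenius (raise coefficients to $q^i$-th powers and shift indices cyclically) to obtain $F_1,\ldots,F_{14}$, all of which vanish at $(d,d^q,\ldots,d^{q^{14}})$ whenever $\det M(d)=0$. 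Computing the elimination ideal of the ideal generated by the $F_i$ with respect to $D_0$ yields an explicit cubic $G(D_0)$ over $\F_{p^{15}}$, so every bad $d$ (for any $h$!) is one of three explicit elements of $\F_{p^{15}}$; for such $d$ the norm condition $d^{q^{10}+q^5+1}=1$ collapses, using $\gcd(s,15)=1$, to the fixed condition $d^{p^{10}+p^5+1}=1$, which the three candidates fail. This Frobenius-orbit/elimination argument is exactly what converts the quantification over all $h$ and all $d\in\F_{q^{15}}$ into one finite check. Two further remarks: the paper never needs common zeros of both determinants -- it shows the second ($10\times 10$) matrix alone is nonsingular on the norm-one locus, which suffices; and the restriction to $s=1$ for $p=5$ is not caused by an ``extra factor in characteristic $5$'', but simply reflects which cases were settled by the search for a suitable $\lambda$ (Table \ref{Table:1}).
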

	\begin{proof}
	We will prove that there exist $a,b,c\in \mathbb{F}_{q^{15}}$ with $a^{q^3}b \neq ab^{q^3}$ and 
	$c \in \mathrm{Im}\, R_{a,b} \setminus \{0\}$ such that the kernel of $P_{a,b,c}(x)$ as in \eqref{Eq:P(x)} is scattered of dimension $7$ in $\F_{q^{15}}$.
	
	First consider the case $p=2$ and $s=1$. Choose  as a generator of $\mathbb{F}_{2^{15}}$ an element $\xi$ such that $\xi^{15} + \xi^5 + \xi^4 + \xi^2 + 1=0$.
	Then put $V=\xi^{31369}$. It can be verified with the help of the Software MAGMA \cite{MAGMA} that $\lambda^{15}  + \lambda + 1=0$. 
	Then for any element $z\in \F_{2^{15}}$ we have $z^q=z^{2^{15h+1}}=z^2$. 
	Set $a=\lambda^2$, $b=\lambda^4$ and 
	\[c:=R_{a,b}(\lambda)=\lambda^{273} + \lambda^{266} + \lambda^{161} + \lambda^{140} + \lambda^{98} + \lambda^{84}=\xi^{8539}\neq 0.\]
	Then the second matrix above, which we will call $M(d)$, reads as
	$$\left(
	\begin{array}{cccccccccc}
	\xi^{3757}&   \xi^{22429}  & 0   &\xi^{17}  & \xi^{20559}  & 0 &  \xi^{10610} &  \xi^{9472} & 0 &  0\\
	0 &  \xi^{18262}  & \xi^{27598}  & 0 &  \xi^{16392} &  \xi^{26663} &  0  & \xi^{5305}   & \xi^{4736} &  0\\
	0  & 0 &  \xi^{9131}  & \xi^{13799} &  0 &  \xi^{8196}   &\xi^{29715}  & 0 &  \xi^{19036}   & \xi^{2368}\\
	0 &  0  & 0 &  \xi^{20949} &  \xi^{23283}  & 0  & \xi^{4098}  & \xi^{31241} &  0   &  \xi^{9518}\\
	1  & 0  & 0 &  0 &  0&   d^{q^5}&   0 &  0&   0  & 0\\
	0  & 1  & 0  & 0 &  0&   0&   d^{q^4} &  0&   0  & 0\\
	0  & 0 &  1  & 0 &  0&   0&   0  & d^{q^3}&   0  & 0\\
	0  & 0 &  0  & 1 &  0&   0&   0 &  0  & d^{q^2}  & 0\\
	0  & 0 &  0  & 0 &  1&   0&   0&   0  & 0   &d^q\\
	0  & 0&   0  & 0&   0&   1&   0&   0  & 0 &  0\\
	\end{array}
	\right)$$
	and its determinant is
	\begin{multline*}
		\det M(d)=\xi^{5977} d^{q^4+q^3+q^2+q} + \xi^{2592} d^{q^3+q^2+q}+ \xi^{4799} d^{q^2+q} + \xi^{8832} d^q\\ 
		+\xi^{4161} d^{q^4+q^3+q^2} + \xi^{19121} d^{q^4+q^2} + \xi^{28007} d^{q^4} + \xi^{27801}.
	\end{multline*}
	Next we prove $\det M(d)\neq 0$ for each $d\in \mathbb{F}_{q^{15}}$ with $d^{q^{10}+q^5+1}=1$. 
	To do this, we define the following multivariate polynomial over the algebraic closure $\overline{\mathbb{F}}_2$ of $\F_2$:
	\begin{multline*}
		F_0(D_0,\ldots,D_{14}):=\xi^{5977} D_4D_3D_2D_1 + \xi^{2592}D_3D_2D_1+ \xi^{4799} D_2D_1+  \xi^{8832} D_1\\+\xi^{4161} D_4D_3D_2 
		+ \xi^{19121} D_4D_2 + \xi^{28007} D_4 + \xi^{27801} \in \overline{\mathbb{F}}_{2}[D_0,\ldots,D_{14}].
	\end{multline*}
	Clearly, $\det M(d)=F_0(d,d^q,\ldots,d^{q^{14}})$. 
	For $i=1,2,\ldots,14$ define $F_i\in \overline{\mathbb{F}}_{2}[D_0,\ldots,D_{14}]$ as the polynomial obtained from $F_0$ by taking $q^i$-th powers of its coefficients and by replacing $D_j$ by $D_{j+i \pmod {15}}$ for $j=0,1,\ldots, 14$. 
	Then, by $d\in \F_{q^{15}}$, $\det M(d)=0$ yields $F_i(d,d^q,\ldots,d^{q^{14}})=0$ for $i=0,1,\ldots, 14$. 
	
	We first compute the ideal $I$ generated by the polynomials $F_i$, $i=1,\ldots,14$. 
	Then we compute  the elimination ideal of $I$ with respect to the variable $D_0$, which is generated by $G(D_0):= D_0^3 + \xi^{7925}D_0^2 + \xi^{24175}D_0 + \xi^{31682}$. The three roots of $G(D_0)$ are $\xi^{15773}, \xi^{16482}, \xi^{32194}$ and therefore  putative values  $d\in \mathbb{F}_{q^{15}}$ for which $\det M(d)=0$ belong to $S:=\{\xi^{15773}, \xi^{16482}, \xi^{32194}\}\subset \mathbb{F}_{2^{15}}$.   Recall that $d$ should also satisfy $d^{q^{10}+q^5+1}=1$ and, since $d \in S \subset \mathbb{F}_{2^{15}}$ and $q=2^{15h+1}$, this equation reads $d^{2^{10}+2^5+1}=1$. None of the  elements of $S$ is a $(2^{10}+2^5+1)$-th root of unity in $\mathbb{F}_{2^{15}}$. This shows that $\det M(d)\neq 0$ when $d^{q^{10}+q^5+1}=1$.
	
	\medskip
	
A similar argument applies to all the other cases. In particular, elements $\lambda \in \mathbb{F}{_{p^{15}}}$ for  $p=2,3$ and $\gcd(s,15)=1$ are summarized in Table \ref{Table:1}. As a notation, if  $p=2$ then $\xi$ satisfies $\xi^{15} + \xi^5 + \xi^4 + \xi^2 + 1=0$, whereas if $p=3$ then $\xi^{15} + 2\xi^8 + \xi^5 + 2\xi^2 + \xi + 1=0$ holds. Finally, for  $p=5$ and $s=1$, the element $\lambda=\xi^{24079949306}$, where  $\xi^{15} + 2\xi^5 + 3\xi^3 + 3\xi^2 + 4\xi + 3=0$, can be chosen.

	\end{proof}

\begin{remark}
A computer search in MAGMA \cite{MAGMA} shows that random $7$-dimensional $\F_q$-subspaces in $V(3,q^5)$, for $q=2,3$, are in general not scattered. Indeed, we tested in total $5180000$ and $3700$ $\F_q$-subspaces of dimension $7$ in $V(3,2^5)$ and $V(3,3^5)$, respectively, and we found that $332477$ $(6.4\%)$ and $625$ $(16.9\%)$ of them are scattered. The number of subspaces examined represents only a tiny part of the whole search space. However, our computations seem to suggest  that the property of being scattered is not so rare for $7$-dimensional $\F_q$-subspaces in $V(3,q^5)$. On the other hand, such scattered subspaces are completely random and a suitable description seems far from being reached.
\end{remark}

\section*{Acknowledgements}

The	research  was supported by the Italian National Group for Algebraic and Geometric Structures and their Applications (GNSAGA - INdAM). 
The second author was partially supported by the J\'anos Bolyai Research Scholarship of the Hungarian Academy of Sciences
and by the National Research, Development and Innovation Office – NKFIH under the grants PD 132463 and K 124950.

	
	\begin{center}
	\begin{table}[h]
	\tabcolsep= 0.7 mm
	\caption{Values $V\in \mathbb{F}{_{p^{15}}}$ for $p=2,3$ and $\gcd(s,15)=1$.}\label{Table:1}
	\begin{center}
	\begin{tabular}{|c||c|c|c|c|c|c|c|c|}
	\hline
	&$s=1$&$s=2$&$s=4$&$s=7$&$s=8$&$s=11$&$s=13$&$s=14$\\
	\hline \hline
	&&&&&&&&\\[-0.4 cm]
	$p=2$&$\xi^{31369}$&$\xi^{14336}$&$\xi^{184}$&$\xi^{136}$&$\xi^{4102}$&$\xi^{16767}$&$\xi^{28172}$&$\xi^{11248}$\\
	&&&&&&&&\\[-0.4 cm]
	\hline
	&&&&&&&&\\[-0.4 cm]
	$p=3$&$\xi^{11227515}$&$\xi^{10565258}$&$\xi^{5832991}$&$\xi^{12725576}$&$\xi^{11963627}$&$\xi^{11963627}$&$\xi^{13348604}$&$\xi^{13348604}$\\[0.1 cm]
	\hline
	\end{tabular}
	\end{center}
	\end{table}
\end{center}


\begin{thebibliography}{pippo}
		
		
		\bibitem{Simeonbook}
		{\sc S. Ball:}
		Finite Geometry and Combinatorial Applications, Cambridge University Press, 2015.
		
		\bibitem{BB}
		{\sc S. Ball and A. Blokhuis:} An easier proof of the maximal arcs conjecture,
		{\it Proc.\ Amer.\ Math.\ Soc.} {\bf 126} (1998), 3377--3380.
		
		\bibitem{BBL2000}
		{\sc S. Ball, A. Blokhuis and M. Lavrauw:} Linear $(q+1)$-fold Blocking Sets in $PG(2,q^4)$, {\it Finite Fields Appl.} {\bf 6} (2000), 294--301.
		
		\bibitem{BBM}
		{\sc S. Ball, A. Blokhuis and F. Mazzocca:}
		Maximal arcs in Desarguesian planes of odd order do not exist, {\it Combinatorica} {\bf 17} (1997), 31--41.
		
		\bibitem{BGMP2018} 
		{\sc D. Bartoli, M. Giulietti, G. Marino and O. Polverino:}
		Maximum Scattered Linear Sets and Complete Caps in Galois Spaces, {\it Combinatorica} {\bf 38} (2018), 255--278.
		
		
		\bibitem{BL2000}
		{\sc A. Blokhuis and M. Lavrauw:} Scattered Spaces with Respect
		to a Spread in $\mathrm{PG}(n, q)$, {\it Geom. Dedicata} {\bf 81}(1-3) (2000), 230--243.

		\bibitem{MAGMA} 
		{\sc W. Bosma, J. Cannon, and C. Playoust:}
		The Magma algebra system. I. The user language. Computational algebra and number theory,
		{\it J. Symbolic Comput.} {\bf 24}(3--4) (1997), 235--265.
		
		
		\bibitem{csajb}
		{\sc B. Csajb\'ok:}
		Scalar q-subresultants and Dickson matrices. {\it J. Algebra} {\bf 547} (2020), 116--128.
		
		\bibitem{CsMPZ2017}
		{\sc B. Csajb\'ok, G. Marino, O. Polverino and F. Zullo: }
		Maximum scattered linear sets and MRD-codes, {\it J.\ Algebraic Combin.} {\bf 46} (2017), 517--531.
		
		\bibitem{CsMPZ2019}
		{\sc B. Csajb\'ok, G. Marino, O. Polverino and F. Zullo:}
		Generalising the scattered property of subspaces, {\it Combinatorica} (2021).
		\url{https://doi.org/10.1007/s00493-020-4347-y}
				
		\bibitem{DeBoer}
		{\sc M. A. De Boer:} Almost MDS Codes, {\it Des. Codes Cryptogr.} {\bf 9} (1996), 143--155.
		
		\bibitem{DL2012}
		{\sc Z. Dvir and S. Lovett:} Subspace evasive sets, in: {\it Proceedings of the 44th ACM Symposium on Theory of Computing}, 351–-358, 2012.
		
		\bibitem{G2011}
		{\sc V. Guruswami:} Linear-algebraic list decoding of folded Reed-Solomon codes, {\it Annual IEEE Conference on Computational Complexity}, 77--85, 2011.
		
		\bibitem{GWX}
		{\sc V. Guruswami, C. Wang and C. Xing:}
		Explicit List-Decodable Rank-Metric and Subspace Codes via Subspace Designs.
		{\it IEEE Trans.\ Info.\ Theory} {\bf 62}(5), 2707-–2718, May 2016.
		
		\bibitem{Hirsbook}
		{\sc J. W. P. Hirschfeld:} Projective Geometries over Finite Fields, second
		ed. Clarendon Press, Oxford, 1998.
		
		\bibitem{LR2015}
		{\sc I. Landjev and A. Rousseva:}
		The Main Conjecture for Near-MDS Codes. WCC2015 - 9th International
		Workshop on Coding and Cryptography 2015, Anne Canteaut, Ga\"etan Leurent, Maria
		Naya-Plasencia, Apr 2015, Paris, France. hal-01276222
		
		\bibitem{LThesis}
		{\sc M. Lavrauw:} Scattered Spaces with respect to Spreads and Eggs in
		Finite Projective Spaces, Ph.D. Thesis, 2001.
		
		\bibitem{Lavrauw} {\sc M. Lavrauw:} Scattered spaces in Galois Geometry,
		{\it Contemporary Developments in Finite Fields and Applications}, 2016, 195--216.
		
		\bibitem{Lu2017}
		{\sc G. Lunardon:}
		MRD-codes and linear sets, {\it J. Combin. Theory Ser. A} {\bf 149} (2017), 1--20.
		
		\bibitem{LuPoPo2002}
		{\sc G. Lunardon, P. Polito and O. Polverino:} A geometric characterisation of linear k-blocking sets, {\it J. Geom.} {\bf 74 (1-2)} (2002), 120--122.
		
		\bibitem{LuPo2004}
		{\sc G. Lunardon and O. Polverino:}
		Translation ovoids of orthogonal polar spaces,
		{\it Forum Math.} {\bf 16} (2004), 663--669.
		
		\bibitem{Polverino}
		{\sc O. Polverino:}
		Linear sets in finite projective spaces,
		{\it Discrete Math.} {\bf 310}(22) (2000), 3096--3107.
		
		\bibitem{PR2004}
		{\sc P. Pudl\'ak and V. R\"odl:} 
		Pseudorandom sets and explicit constructions of ramsey graphs, {\it Quaderni di Matematica} {\bf 13} (2004), 327--346.
		
		\bibitem{THR} {\sc T. H. Randrianarisoa}: A geometric approach to rank metric codes and a classification of constant weight codes, {\it Des. Codes Cryptogr.} {\bf 88}, pages 1331--1348 (2020)
		
		\bibitem{SheekeySurvey}
		{\sc J. Sheekey:} MRD Codes: Constructions and Connections, in:
		Combinatorics and Finite Fields: Difference Sets, Polynomials,
		Pseudorandomness and Applications Ed. by Schmidt, Kai-Uwe and
		Winterhof, Arne, Series: Radon Series on Computational and Applied
		Mathematics 23, De Gruyter 2019.
		
		\bibitem{ShVdV}
		{\sc J. Sheekey and G. Van de Voorde:}
		Rank-metric codes, linear sets, and their duality, {\it Des. Codes Cryptogr.} {\bf 88} (2020), 655--675.
		
		\bibitem{Tallini1961}
		{\sc G. Tallini:}
		On caps of kind $s$ in a Galois $r$-dimensional space, 
		{\it Acta Arithmetica} {\bf 7}(1) (1961), 19--28.
		
		\bibitem{Taylor} {\sc D. E. Taylor}: The geometry of the classical groups. Sigma Series in Pure Mathematics. 9. Berlin: Heldermann Verlag. v (1992).	
		
	\end{thebibliography}
\end{document}